\newtheorem{thm}{Theorem}[section]
\newtheorem{lema}[thm]{Lemma}
\newtheorem{prop}[thm]{Proposition}
\newtheorem{rem}[thm]{Remark}
\newcommand{\uep}{u^\eps}
\newcommand{\bu}{\boldsymbol{u}}
\newcommand{\eps}{\varepsilon}
\newcommand{\R}{\mathbb{R}}
\newcommand{\K}{\mathbb{K}}
\newcommand{\gd}{\nabla}
\newcommand{\cchi}{{\mbox{\raisebox{3pt}{$\chi$}}}}
\newcommand{\lraup}{\relbar\joinrel\relbar\joinrel\relbar\joinrel\rightharpoonup}
\newcommand{\tende}[3]{\mbox{\raisebox{-3pt}{$\begin{CD}{#1}@>>{#2}>{#3}\end{CD}$}}}
\newcommand{\bv}{\boldsymbol{v}}
\newcommand{\bff}{\boldsymbol{f}}
\newcommand{\bs}{\boldsymbol}
\newdimen\authorswidth
\newdimen\maxauthorswidth
\newcommand{\sameauthor}
\newcommand{\nextref}[4]{\bibitem{#1} #2, {\sl #3}, #4.
\global\setbox5=\hbox{#2}
\ifnum\wd5>\maxauthorswidth\authorswidth=\maxauthorswidth \else
\authorswidth=\wd5\fi
\renewcommand{\sameauthor}{\rule{\authorswidth}{0.5truept}}}
\begin{document}

\baselineskip 14pt

\numberwithin{equation}{section}

\title[The nonlinear $N$-membranes problem]{The nonlinear $N$-membranes evolution problem}

\author[J.F. Rodrigues]{Jos\'e Francisco Rodrigues}
\address{CMAF, Department of Mathematics, University of Lisbon,
Av. Prof. Gama Pinto, 2, 1649-003 Lisboa, Portugal}
\email{rodrigue@fc.ul.pt}

\author[L. Santos]{Lisa Santos}
\address{CMAT, Department of Mathematics,
University of Minho, Campus de Gualtar, 4710-057 Braga,
Portugal}
\email{lisa@math.uminho.pt}

\author[J.M. Urbano]{Jos\'e Miguel Urbano}
\address{CMUC, Department of Mathematics, University of Coimbra,
3001-454 Coimbra, Portugal} \email{jmurb@mat.uc.pt}

\keywords{Variational inequality; $p$-Laplacian; stability;
asymptotic behaviour; coincidence set}

\subjclass[2000]{35R35; 35R45; 35K65; 47J20}

\begin{abstract}

The parabolic $N$-membranes problem for the $p$-Laplacian and the
complete order constraint on the components of the solution is
studied in what concerns the approximation, the regularity and the
stability of the variational solutions. We extend to the
evolutionary case the characterization of the Lagrange multipliers
associated with the ordering constraint in terms of the
characteristic functions of the coincidence sets. We give continuous
dependence results, and study the asymptotic behavior as
$t\rightarrow\infty$ of the solution and the coincidence sets,
showing that they converge to their stationary counterparts.
\end{abstract}

\maketitle

\vspace{5mm}

\textit{Dedicated to V.A. Solonnikov, on the occasion of his $75$th
birthday, with admiration and friendship.}

\vspace{5mm}

\section{Introduction}

The aim of this work is to analyze the quasilinear parabolic
$N$-system associated with the scalar operator involving the
$p$-Laplacian in the elliptic part
\begin{equation}
Pu_i \equiv \partial_t u_i - \nabla \cdot \left( |\nabla
u_i|^{p-2} \nabla u_i \right) \, , \quad i=1,\ldots,N,
\label{operator}
\end{equation}
with $1<p<\infty$, $\partial_t=\partial / \partial t$ and $\nabla =
(\partial / \partial x_1, \ldots , \partial / \partial x_d)$, in a
space-time cylinder $\Omega_T=\Omega \times (0,T)$, $\Omega \subset
\R^d$, in the case in which the solution
$\boldsymbol{u}=\boldsymbol{u} (x,t)=(u_1,\ldots,u_N)$ has all its
components completely ordered
\begin{equation}
u_1 \geq u_2 \geq \ldots \geq u_N \, , \quad \mbox{a.e. } (x,t) \in
\Omega_T, \label{ordering}
\end{equation}
and subjected to a given nonhomogeneous term
$\boldsymbol{f}=\boldsymbol{f} (x,t)=(f_1,\ldots,f_N)$ and
given boundary conditions. For simplicity, we assume
\begin{equation}
\boldsymbol{u} = \boldsymbol{0}\quad \mbox{on }  \Sigma_T=
\partial \Omega \times (0,T)\qquad\mbox{ and }\qquad \boldsymbol{u}
= \boldsymbol{h}\quad \mbox{on }  \Omega_0=\Omega\times\{0\},
\label{bdry_cond}
\end{equation}
for given Cauchy data $\boldsymbol{h}$.

The time independent case corresponds to the classical $N$-membranes
problem which can be formulated as an elliptic variational
inequality. It has been studied for different types of operators
(see \cite{v1,v2,cv,ars,ars1}) associated with a convex subset of a
Sobolev space determined by the constraint \eqref{ordering}. In the
recent papers \cite{ars,ars1} it has been shown, in particular, that
the $N$-membranes problem can be interpreted as a reaction-diffusion
system with additional discontinuous nonlinearities. In the
evolutionary case \eqref{operator}, it will be shown in this work
that the solution $\boldsymbol{u}$ solves a parabolic system of the
form
\begin{equation} \label{react_diff}
\boldsymbol{P} \boldsymbol{u} = \boldsymbol{f} + \boldsymbol{R}
(x,t,\boldsymbol{u}) \quad \mbox{in} \ \Omega_T ,
\end{equation}
where $\boldsymbol{P} \boldsymbol{u}=(Pu_1,\ldots,Pu_N)$ and each of the components of the nonlinear reaction term
$\boldsymbol{R}$ depends on $(x,t)$ through linear combinations of
the $f_i$, $1\leq i \leq N$, and on $\boldsymbol{u}$ through the
characteristic functions $\cchi_{j,k}=\cchi_{j,k}(x,t)$ of the
$N(N-1)/2$ coincidence sets
\begin{equation}\label{coinc_sets}
I_{j,k} = \left\{ (x,t) \in \Omega_T \, : \, u_j(x,t) = \ldots =
u_k(x,t) \right\} \, , \quad 1\leq j<k\leq N ,
\end{equation}
\textit{i.e.}, $\cchi_{j,k}(x,t)=1$ if $(x,t) \in I_{j,k}$ and
$\cchi_{j,k}(x,t)=0$ otherwise.

We can illustrate the general form of the system \eqref{react_diff} for $N=3$
(see \cite{ars})

\begin{small}
\begin{equation}
\left\{\begin{array}{lllllllll}
Pu_1&\!\!=\!\!&f_1&\!\!+&\!\!\frac12(f_2-f_1)\cchi_{1,2}&&&
\!\!+\!\!&\frac16(2f_3-f_2-f_1)\cchi_{1,3}\vspace{3mm}\\
Pu_2&\!\!=\!\!&f_2&\!\!-\!\!&\frac12(f_2-f_1)\cchi_{1,2}&\!\!+
\!\!&\frac12(f_3-f_2)\cchi_{2,3}&\!\!+\!\!&\frac16(2f_2-f_1-f_3)\cchi_{1,3}\vspace{3mm}\\
Pu_3&\!\!=\!\!&f_3&&&\!\!-\!\!&\frac12(f_3-f_2)\cchi_{2,3}&\!\!+\!\!&\frac16(2f_1-f_2-f_3)\cchi_{1,3}
\end{array}\right.
\label{sistemachi}
\end{equation}
\end{small}

\noindent which contains the simpler case $N=2$, that corresponds to
the first two equations with $\cchi_{2,3}\equiv 0$ and
$\cchi_{1,3}\equiv 0$, in which case the third equation is
independent of the first two. Noting that, in general, $\cchi_{j,k}
= \cchi_{j,j+1} \cchi_{j+1,j+2} \ldots \cchi_{k-1,k}$, for $1 \leq j
< k \leq N$, in \eqref{sistemachi} the last terms containing
$\cchi_{1,3} = \cchi_{1,2} \cchi_{2,3}$ are in fact doubly nonlinear
in $\boldsymbol{u}$. This introduces additional difficulties in
analyzing the stability of the system with respect to the
perturbation of the data. In fact, in section 3, we show that the
sufficient conditions on the averages of the components of
$\boldsymbol{f}$, obtained in \cite{ars1} for the stability of the
coincidence sets $I_{j,k}$ in the stationary problem, extend to the
parabolic case as well. In particular, for $N=3$, they take the form
$$f_1\neq f_2, \qquad f_2\neq f_3,\qquad f_1\neq\frac{f_2+f_3}2,
\qquad f_3\neq\frac{f_1+f_2}2\qquad \mbox{a.e. in }\Omega_T.$$

We notice that the stability result on the $\cchi_{j,k}$ is not a
direct consequence of the stability of the solution $\boldsymbol{u}$
with respect to the data $\boldsymbol{f}$ and $\boldsymbol{h}$,
which can, however, be obtained by direct variational methods, as we
also show in subsection \ref{24}.

Classical monotonicity methods (see \cite{l}, for example) or the
theory of accretive operators and evolution inclusions in Banach
spaces (see \cite{dm}, \cite{s}, \cite{z} or \cite{ao} and their
bibliography) are directly applicable and yield general results on
the existence of solutions to our problem, when formulated as a
variational inequality in the convex set associated with the
constraints \eqref{ordering}. In section 2, we introduce  an
approximation of the variational inequality formulation and we
obtain directly useful \textit{a priori} estimates for the existence
of solutions. We remark that we assume the $p'$-integrability of
$\boldsymbol{f}$ and rely on the $p$-integrability of a compatible
$\boldsymbol{h}$ and its derivatives, but we do not require the
boundedness of $\boldsymbol{h}$ nor of the variational solution
globally in $\overline{\Omega}_T$.

Considering the relation of the upper and lower membranes (in
particular, the two-membrane problem) with the obstacle problem and
of the inner membranes of the $N$-problem, with $N\geq 3$, with the
two--obstacles problem, we apply the dual estimates for unilateral
parabolic problems (see \cite{ct}, \cite{dm} or \cite{d}) to obtain
Lewy-{\mbox{-Stampacchia} type inequalities
\begin{equation}
\bigwedge_{j=1}^i f_j \leq P u_i \leq \bigvee_{j=i}^N f_j \qquad
\mbox{a.e. in } \Omega_T, \quad i=1,\ldots,N, \label{ls}
\end{equation}
for the parabolic operator \eqref{operator}. Here we use the
notation
$$\bigvee_{i=1}^k \xi_i =\xi_1 \vee \ldots \vee \xi_k = \sup \{ \xi_1,
\ldots,\xi_k \} \quad \mbox{and} \quad \bigwedge_{i=1}^k \xi_i =
\xi_1 \wedge \ldots \wedge \xi_k = \inf \{ \xi_1,\ldots,\xi_k \}$$
and we also denote $\xi^+=\xi\vee 0$ and $\xi^-=-(\xi\wedge 0)$.

We also show how the estimates on $P u_i$ imply that the variational
solution to the $N$-membranes problem solves a.e. a system of the
type \eqref{react_diff}, for an explicit $\boldsymbol{R}$ with the
same $p'$-integrability as $\bff$, extending the analogous result
obtained in \cite{ars1} for the stationary problem. This implies, in
particular when $\bff$ is bounded, the H\"{o}lder continuity of the
solution and of its gradient. In fact, this is an immediate
consequence of known estimates for the parabolic operator
\eqref{operator}, even without knowing the explicit form of
$\boldsymbol{R}$, as we observe in section 2. Even for the linear
case $p=2$, for which we can apply Solonnikov's estimates in
$W^{2,1}_p(\Omega_T)$, the regularity obtained here for the solution
of the evolutionary $N$-membranes problem is new.

In section 3, we study the asymptotic convergence, when $t
\rightarrow \infty$, of the solution $\boldsymbol{u} (t)$ to the
corresponding solution of the stationary problem of \cite{ars1} in
$\boldsymbol{L}^2(\Omega_T)$  (here we denote
$\boldsymbol{L}^2(\Omega_T)=\left[L^2(\Omega_T)\right]^N$), in the
case $p \geq 2$. We show how a modest convergence of the solution,
obtained as in \cite{simon}, also implies the asymptotic
stabilization of the evolution coincidence sets towards the
stationary ones, under a natural nondegeneracy assumption identified
in \cite{ars1}.

Finally, we observe that most results still hold, with suitable
adaptations, for more general quasilinear parabolic scalar operators
$$Pu = \partial_t u - \nabla \cdot \left( a(x,t,\nabla u) \right), $$
in particular, for strongly monotone vector fields $a(\cdot,\xi)$,
with $p$-structure as in \cite{ars1}, as well as more general data
$\bff$ in $L^q(0,T;\boldsymbol{L}^r(\Omega))$ (see \cite{bdgo}).

For simplicity of presentation, we limit ourselves here to the case
of the $p$-\mbox{-Laplacian} with homogeneous Dirichlet data, i.e., we
consider only variational solutions in the usual Sobolev space
$\boldsymbol{W}^{1,p}_0(\Omega)=\left[W^{1,p}_0(\Omega)\right]^N$,
for $1<p<\infty$. The case of a time-dependent Dirichlet boundary
condition is more delicate and will be considered in \cite{rsu}.

\vspace{5mm}

\section{Approximation and regularity of variational solutions}

Let $\Omega \subset \mathbb{R}^d$ be a bounded domain with a
Lipschitz boundary, let $T>0$, and define the space-time domain
$\Omega_T:=\Omega \times (0,T)$, with parabolic boundary $\partial_p
\Omega_T:=\Sigma_T \cup  \Omega _0$. We use $N$-vectorial notation for vector fields
$$\boldsymbol{w}:=(w_1,\ldots,w_N) \in \mathbb{R}^N$$
and function spaces $\boldsymbol{F} := \left[ F \right]^N$. For
$1<p<\infty$, define the differential operator
$$\nabla_p \boldsymbol{w} = \left( \nabla_p w_1, \ldots , \nabla_p w_N\right),
\quad \nabla_p w_i := \left| \nabla w_i \right|^{p-2} \nabla w_i
\quad\mbox{ and }\quad\Delta_p\boldsymbol{w}=\nabla\cdot\nabla_p
\boldsymbol{w}.$$

We assume the data satisfy
\begin{equation}\label{f}
\boldsymbol{f} \in \boldsymbol{L}^{p'} (\Omega_T) \qquad\mbox{ and }\qquad
\boldsymbol{h}\in\mathbb{K}\cap\boldsymbol{L}^2(\Omega),
\end{equation}
where $p'=p/(p-1)$ and $\K$ is the closed convex subset of
$\boldsymbol{W}^{1,p}_0(\Omega)$ defined by
\begin{equation}
\mathbb{K} = \left\{ \boldsymbol{v} \in
\boldsymbol{W}^{1,p}_0(\Omega) \: : \: v_1 \geq \ldots \geq v_N, \
\mbox{a.e. in} \ \Omega\right\} . \label{convexo}
\end{equation}

\vspace{3mm}

\subsection{Variational formulation of the problem}

The evolutive $N$-membranes problem for the $p$-Laplace operator
consists in finding a vector field
$\boldsymbol{u}=\boldsymbol{u}(x,t)$ such that
\begin{equation}
\boldsymbol{u} \in  L^p \left( 0,T; \boldsymbol{W}^{1,p} _0(\Omega)
\right) \cap C \left( [0,T]; \boldsymbol{L}^2(\Omega) \right),
\label{def1}
\end{equation}
\begin{equation}
\partial_t \boldsymbol{u} \in L^{p^{\prime}} \big( 0,T; \boldsymbol{W}^{-1,p^{\prime}}
(\Omega) \big),\label{def2}
\end{equation}
\begin{equation}\label{def3}
\boldsymbol{u}(t) \in \mathbb{K}, \ \textrm{a.e.} \ t \in (0,T),
\quad \boldsymbol{u}(0) = \boldsymbol{h}  \in
\boldsymbol{L}^2(\Omega),
\end{equation}
and, for a.e. $t \in (0,T)$ and all $\boldsymbol{v} \in \mathbb{K}$,
\begin{equation}\label{ineq_var}
\left\langle \partial_t \boldsymbol{u} (t) ,
\boldsymbol{v}-\boldsymbol{u}(t)\right\rangle + \int_\Omega \nabla_p
\boldsymbol{u}(t) : \nabla \left(
\boldsymbol{v}-\boldsymbol{u}(t)\right) \geq \int_\Omega
\boldsymbol{f}(t) \cdot \left(
\boldsymbol{v}-\boldsymbol{u}(t)\right).
\end{equation}
Here, $\langle \cdot , \cdot \rangle$ denotes the sum of the $N$
duality pairings in $\bs W^{-1,p^{\prime}} (\Omega) \times \bs W_0^{1,p}
(\Omega)$ of the components of the vector fields, and $A:B$ denotes
the scalar product of the matrices $A$ and $B$.

We observe that, by a simple comparison argument, there exists at
most one solution of \eqref{def1}--\eqref{ineq_var}, the variational
inequality formulation of the evolutionary $N$-membranes problem.

\vspace{3mm}

\subsection{The approximating problem} \label{22}

We approximate the variational inequality \eqref{ineq_var} using a
bounded penalization. For that purpose, for each $\varepsilon>0$,
let $\theta_\varepsilon$ be the real function defined in $[-\infty,
+\infty]$ by
$$\theta_\varepsilon (\theta)=\left\{
\begin{array}{cll}
-1 \ & \ \textrm{if}\ & \theta \leq -\eps\vspace{2mm}\\
\theta/\varepsilon \ & \ \textrm{if}\ & -\eps< \theta < 0\vspace{2mm}\\
0 \ & \ \textrm{if}\ & \theta \geq 0.
\end{array} \right.
$$
The approximating penalized problem is the system of boundary value
problems defined as follows:
\begin{equation}
\label{prob aprox}
 \left\{\begin{array}{l}
 P u_i^\varepsilon  + \xi_i\theta_\varepsilon \left( u_i^\varepsilon -
u_{i+1}^\varepsilon \right)- \xi_{i-1} \theta_\varepsilon \left(
u_{i-1}^\varepsilon - u_i^\varepsilon  \right)  =  f_i\quad
  \textrm{ in } \Omega_T\vspace{3mm}\\
u_i^\varepsilon =0\quad\textrm{ on }\Sigma_T\qquad\mbox{ and }\qquad
u_i^\varepsilon=h_i\quad\textrm{ on }\Omega_0\vspace{1mm}
\end{array}
\right.
\end{equation}
with $i=1,\ldots,N$, and the convention  $u_0\equiv +\infty$ and
$u_{N+1} \equiv -\infty$, where for $i=1,\ldots,N$,
\begin{equation}\label{xiis}
\xi_0=\max\Big\{\frac{f_1+\cdots+f_i}{i}:i=1,\ldots,
N\Big\},\qquad\xi_i=i\,\xi_0-(f_1+\cdots+f_i),
\end{equation}  (see
\cite{ars1}). Notice that, for $i=1,\ldots,N$, we have $\xi_i\ge 0$ and $\xi_i\in L^{p'}(\Omega)$.

\begin{lema} \label{Tmonotone}Using the convention $v_0=+\infty$ and $v_{N+1}=-\infty$, the operator
$$\langle B\boldsymbol{v},\boldsymbol{w}\rangle=\sum_{i=1}^N\int_\Omega \left(\xi_i \theta_\varepsilon \left(v_i - v_{i+1}
\right)- \xi_{i-1} \theta_\varepsilon \left( v_{i-1} - v_i
\right)\right)w_i,\qquad\boldsymbol{v},\,\boldsymbol{w}\in\,\boldsymbol{W}^{1,p}_0(\Omega),$$
is T-monotone, i.e.,
$$\langle
B\boldsymbol{v}-B\boldsymbol{w},(\boldsymbol{v}-\boldsymbol{w})^+\rangle\ge
0,\qquad\forall\,\boldsymbol{v},\boldsymbol{w}\in\boldsymbol{W}^{1,p}_0(\Omega).$$
\end{lema}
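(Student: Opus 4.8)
The plan is to first rewrite the bilinear expression $\langle B\boldsymbol{v},\boldsymbol{w}\rangle$ in a symmetric ``telescoped'' form by reindexing and recombining the two sums, then reduce the T-monotonicity inequality to a pointwise sign condition on the integrand, and finally settle that sign condition using only the monotonicity of $\theta_\varepsilon$ and of the positive-part map $s\mapsto s^+$.

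First I would exploit the boundary conventions. Since $v_0=+\infty$ and $v_{N+1}=-\infty$, both arguments $v_0-v_1$ and $v_N-v_{N+1}$ equal $+\infty$, so $\theta_\varepsilon(v_0-v_1)=\theta_\varepsilon(v_N-v_{N+1})=0$ and the terms carrying $\xi_0$ (at $i=1$) and $\xi_N$ (at $i=N$) drop out. Shifting the index $j=i-1$ in the sum involving $\xi_{i-1}\theta_\varepsilon(v_{i-1}-v_i)$ and recombining it with the first sum, the cross terms pair up and I expect to reach the compact form
$$\langle B\boldsymbol{v},\boldsymbol{w}\rangle=\sum_{i=1}^{N-1}\int_\Omega \xi_i\,\theta_\varepsilon(v_i-v_{i+1})\,(w_i-w_{i+1}).$$
This rewriting is the one genuinely computational step, but it is routine bookkeeping (one can check it directly for $N=2$).

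With this form in hand, subtracting the corresponding expression for $\boldsymbol{w}$ and testing against $(\boldsymbol{v}-\boldsymbol{w})^+=\big((v_1-w_1)^+,\ldots,(v_N-w_N)^+\big)$ gives
$$\langle B\boldsymbol{v}-B\boldsymbol{w},(\boldsymbol{v}-\boldsymbol{w})^+\rangle=\sum_{i=1}^{N-1}\int_\Omega \xi_i\big[\theta_\varepsilon(v_i-v_{i+1})-\theta_\varepsilon(w_i-w_{i+1})\big]\big[(v_i-w_i)^+-(v_{i+1}-w_{i+1})^+\big].$$
Since each $\xi_i\ge 0$, it suffices to prove the bracketed product is nonnegative a.e. Setting $\alpha=v_i-w_i$ and $\beta=v_{i+1}-w_{i+1}$, the key observation is the algebraic identity $(v_i-v_{i+1})-(w_i-w_{i+1})=\alpha-\beta$, so the two arguments of $\theta_\varepsilon$ differ by exactly the same quantity $\alpha-\beta$ that controls $\alpha^+-\beta^+$.

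The conceptual crux is then the sign analysis, which I would settle by distinguishing the cases $\alpha\ge\beta$ and $\alpha<\beta$. If $\alpha\ge\beta$ then $v_i-v_{i+1}\ge w_i-w_{i+1}$, so the monotonicity of $\theta_\varepsilon$ makes the first bracket $\ge 0$, while the monotonicity of $s\mapsto s^+$ makes $\alpha^+-\beta^+\ge 0$; if $\alpha<\beta$, both brackets are $\le 0$. In either case the product is $\ge 0$, and integrating against $\xi_i\ge 0$ and summing over $i$ yields the claim. The main point is thus recognizing that $\theta_\varepsilon$ and the positive part are both nondecreasing and are fed arguments with the \emph{same} ordering, which is precisely what the identity $(v_i-v_{i+1})-(w_i-w_{i+1})=(v_i-w_i)-(v_{i+1}-w_{i+1})$ guarantees.
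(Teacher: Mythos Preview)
Your proof is correct and follows essentially the same route as the paper: rewrite $\langle B\boldsymbol{v},\boldsymbol{w}\rangle$ in the telescoped form $\sum_{i=1}^{N-1}\int_\Omega \xi_i\,\theta_\varepsilon(v_i-v_{i+1})(w_i-w_{i+1})$, then conclude by the nonnegativity of $\xi_i$ and the monotonicity of $\theta_\varepsilon$. Your case analysis on $\alpha\gtrless\beta$ merely spells out what the paper leaves as ``the conclusion follows,'' namely that the two bracketed factors always share the same sign because $\theta_\varepsilon$ and $s\mapsto s^+$ are both nondecreasing and receive arguments with the same ordering.
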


\begin{proof} Since we can rewrite
$$\langle B\boldsymbol{v},\boldsymbol{w}\rangle=\sum_{i=1}^{N-1}\int_\Omega \xi_i
\theta_\varepsilon \left(v_i - v_{i+1} \right)(w_i-w_{i+1}),$$ it is
enough to observe that
\begin{multline}
\langle
B\boldsymbol{v}-B\boldsymbol{w},(\boldsymbol{v}-\boldsymbol{w})^+\rangle=\\
\nonumber\displaystyle{\hspace{1cm}\sum_{i=1}^{N-1}\int_\Omega
\xi_i\Big(\theta_\varepsilon(v_i-v_{i+1})-\theta_\varepsilon(w_i-w_{i+1})\Big)\,
\big((v_{i}-w_i)^+-(v_{i+1}-w_{i+1})^+\big)}.
\end{multline}
As $\xi_i\ge 0$, for $i=1,\ldots,N$ and $\theta_\varepsilon$ is
monotone nondecreasing, the conclusion follows.
\end{proof}

\begin{prop}\label{prop22}
Under assumption {\em (\ref{f})}, the approximating problem
\eqref{prob aprox} has a unique solution $(u_1^\varepsilon, \ldots,
u_N^\varepsilon) \in  L^p ( 0,T;\boldsymbol{ W}^{1,p}_0 (\Omega) )
\cap C\left( [0,T]; \boldsymbol{L}^2(\Omega) \right)$ such that
\begin{equation}
u_i^\varepsilon \leq u_{i-1}^\varepsilon+\eps, \quad i=2,\ldots,N.
\label{sol_aprox}
\end{equation}
\end{prop}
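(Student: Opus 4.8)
The plan is to treat \eqref{prob aprox} as an abstract Cauchy problem in the standard functional framework $\boldsymbol{W}^{1,p}_0(\Omega) \subset \boldsymbol{L}^2(\Omega) \subset \boldsymbol{W}^{-1,p'}(\Omega)$ underlying \eqref{def1}--\eqref{def2}, writing it as
\[
\partial_t \boldsymbol{u}^\varepsilon - \Delta_p \boldsymbol{u}^\varepsilon + B\boldsymbol{u}^\varepsilon = \boldsymbol{f}, \qquad \boldsymbol{u}^\varepsilon(0) = \boldsymbol{h},
\]
with $B$ the penalization operator of Lemma \ref{Tmonotone}. First I would record the structural properties of the spatial operator $\boldsymbol{v} \mapsto -\Delta_p \boldsymbol{v} + B\boldsymbol{v}$ on $\boldsymbol{W}^{1,p}_0(\Omega)$: the vector $p$-Laplacian is bounded, hemicontinuous, coercive and monotone, while $B$ is bounded and Lipschitz (since $\theta_\varepsilon$ is Lipschitz and each $\xi_i \in L^{p'}(\Omega)$ is nonnegative), monotone by the computation in the proof of Lemma \ref{Tmonotone}, and satisfies $\langle B\boldsymbol{v},\boldsymbol{v}\rangle \ge 0$ because $s\,\theta_\varepsilon(s) \ge 0$ for every $s$. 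Hence the sum is a bounded, coercive, hemicontinuous, monotone operator and the standard theory for monotone parabolic problems (e.g.\ \cite{l}, \cite{s}) yields a unique $\boldsymbol{u}^\varepsilon$ with the regularity \eqref{def1}--\eqref{def2}; uniqueness also follows directly by testing the difference of two solutions with itself and invoking monotonicity.

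The substantive part is the one-sided bound \eqref{sol_aprox}. For fixed $i \in \{2,\dots,N\}$ I would test the difference of the $i$-th and $(i-1)$-th equations in \eqref{prob aprox} with $\phi := (u_i^\varepsilon - u_{i-1}^\varepsilon - \varepsilon)^+$, which lies in $L^p(0,T;W^{1,p}_0(\Omega))$, vanishes on $\Sigma_T$, and satisfies $\phi(0) = 0$ since $\boldsymbol{h} \in \mathbb{K}$ forces $h_i - h_{i-1} \le 0 < \varepsilon$. The parabolic term produces $\tfrac12\frac{d}{dt}\|\phi(t)\|_{L^2(\Omega)}^2$ after integration in time, and the elliptic term $\int_\Omega (\nabla_p u_i^\varepsilon - \nabla_p u_{i-1}^\varepsilon)\cdot\nabla\phi$ is nonnegative by the monotonicity of $\eta \mapsto |\eta|^{p-2}\eta$, so these two contributions can only help.

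The key is then the algebraic structure of the reaction and data terms. Using $\xi_i - \xi_{i-1} = \xi_0 - f_i$, which is immediate from \eqref{xiis}, I would rewrite $f_i - f_{i-1}$ in terms of $\xi_{i-2}, \xi_{i-1}, \xi_i$ and combine it with the three penalization terms coming from the difference of the two equations. After regrouping, the right-hand side becomes a sum of expressions of the form $\pm \int_\Omega \xi_j\bigl(1 + \theta_\varepsilon(\cdot)\bigr)\phi$: the term carrying the factor $\xi_{i-1}\bigl(1 + \theta_\varepsilon(u_{i-1}^\varepsilon - u_i^\varepsilon)\bigr)$ vanishes identically on $\{\phi > 0\}$, because there $u_{i-1}^\varepsilon - u_i^\varepsilon < -\varepsilon$ forces $\theta_\varepsilon = -1$, while the remaining terms come with the correct sign since $1 + \theta_\varepsilon \ge 0$ and $\xi_j \ge 0$. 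This shows the whole right-hand side is $\le 0$, whence $\frac{d}{dt}\|\phi(t)\|_{L^2(\Omega)}^2 \le 0$; together with $\phi(0) = 0$ this gives $\phi \equiv 0$, that is, \eqref{sol_aprox}.

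I expect the main obstacle to be bookkeeping rather than conceptual. One must handle the boundary indices through the conventions $u_0^\varepsilon \equiv +\infty$ and $u_{N+1}^\varepsilon \equiv -\infty$, so that the extreme penalization terms and the corresponding $\xi_0$ contributions drop out for $i=2$ and $i=N$, and track every sign so that the decisive cancellation in the $\xi_{i-1}$ term and the favourable signs of the $\xi_{i-2}$ and $\xi_i$ terms all align. The precise shape of the cut-off $\theta_\varepsilon$ — in particular that it saturates at $-1$ below $-\varepsilon$ — is exactly what produces this cancellation and what pins the bound at the level $\varepsilon$.
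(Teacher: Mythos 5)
Your proposal is correct and follows essentially the same route as the paper's proof: existence and uniqueness via standard monotone-operator theory, and the bound \eqref{sol_aprox} by testing the difference of the $i$-th and $(i-1)$-th equations with $(u_i^\varepsilon-u_{i-1}^\varepsilon-\varepsilon)^+$, exploiting the monotonicity of $\nabla_p$, the signs $\xi_j\ge 0$, the saturation $\theta_\varepsilon\equiv -1$ below $-\varepsilon$, the telescoping identity $\xi_i-\xi_{i-1}=\xi_0-f_i$, and $\boldsymbol{h}\in\mathbb{K}$ at $t=0$; your regrouping into $\xi_j\bigl(1+\theta_\varepsilon(\cdot)\bigr)$ factors is merely a tidier bookkeeping of the paper's termwise bounds and telescoping. (One inessential quibble: $B$ need not be Lipschitz when the $\xi_j$ are only in $L^{p'}(\Omega)$, but the boundedness, hemicontinuity and monotonicity you also invoke are all that the standard theory requires.)
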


\begin{proof}
The existence and uniqueness follow, respectively, from standard
results concerning monotone operators and comparison (see \cite{l}
or \cite{z}), for instance, using the Faedo-Galerkin approximation.
We notice that, since $\bff\in L^{p'}(\Omega_T)\subset
L^{p'}(0,T;\boldsymbol{W}^{-1,p'}(\Omega))$, we obtain, in
particular, that $\partial_t\boldsymbol{u}^\eps\in
L^{p'}(0,T;\boldsymbol{W}^{-1,p'}(\Omega))$.

To prove inequality \eqref{sol_aprox}, multiply both the $i$-th and
the $(i-1)$-th equations by $(u_i^\varepsilon - u_{i-1}^\varepsilon
-\eps)^+$, subtract and integrate over $\Omega$, obtaining
  \begin{alignat*}{1}
\frac{1}{2} \frac{d}{dt} \int_\Omega \left| (u_i^\varepsilon
-\right.&\left. u_{i-1}^\varepsilon-\eps )^+ \right|^2+\int_\Omega
\Big(\gd_p\uep_i-\gd_p\uep_{i-1}\Big)
\cdot\gd(u_i^\eps-u_{i-1}^\eps-\eps)^+\vspace{1mm}\\
=&\int_\Omega\left[\big(f_i-\xi_i\theta_\eps(\uep_i-\uep_{i+1})+\xi_{i-1}\theta_\eps(\uep_{i-1}-\uep_{i}-\eps)\big)
(\uep_i-\uep_{i-1})^+\right.\vspace{1mm}\\
&\
\left.-\big(f_{i-1}-\xi_{i-1}\theta_\eps(\uep_{i-1}-\uep_{i})+\xi_{i-2}\theta_\eps(\uep_{i-2}-\uep_{i-1})\big)
(\uep_i-\uep_{i-1})^+\right]\vspace{1mm}\\
\le&\int_\Omega\big((f_i-f_{i-1})+(\xi_i-\xi_{i-1})-(\xi_{i-1}-\xi_{i-2})\big)(\uep_i-\uep_{i-1}-\eps)^+\le
0.
 \end{alignat*}

Integrating between $0$ and $t$, using the fact that
$h_1\ge\cdots\ge h_N$ and the inequality $$\displaystyle{\int_\Omega
\Big(\gd_p\uep_i-\gd_p\uep_{i-1}\Big)
\cdot\gd(u_i^\eps-u_{i-1}^\eps-\eps)^+}\ge 0,$$ we get
\begin{equation}
\frac12\int_\Omega\left[\big(\uep_i(t)-\uep_{i-1}(t)-\eps\big)^+\right]^2\le
0,
\end{equation}
and so $\uep_i\le\uep_{i-1}+\eps$ a.e. in $\Omega_T$.
\end{proof}

\vspace{3mm}

\subsection{Existence of variational solutions}

The proof of the existence of solution for the variational
inequality (\ref{ineq_var}) will be done passing to the limit in
$\eps\rightarrow 0$ on the sequence of approximating solutions
$\bu^\eps$, by using the following {\it a priori } estimates that
can be rigorously obtained through the respective Faedo-Galerkin
approximations.

\begin{prop} Under assumption {\em (\ref{f})}, the solution of the approximating problem {\em (\ref{prob
aprox})} satisfies the following estimates, for a nonnegative
constant $C$, independent of $\eps$:
\begin{equation}\label{210}
\|\uep_i\|_{L^\infty(0,T;L^2(\Omega))}+\|\gd\uep_i\|_{L^p(\Omega_T)}\le
C,
\end{equation}
\begin{equation}\label{211}
\|\partial_t\uep_i\|_{L^{p'}(0,T;W^{-1,p'}(\Omega))}\le
C,
\end{equation}
\begin{eqnarray}\label{215}
 \left\{\begin{array}{cllll}
 f_1&\le&P\uep_1&\le& f_1+\xi_1\qquad\qquad\\
\vdots& &&&\vdots\\
 f_i-\xi_{i-1}&\le&P\uep_i&\le&f_i+\xi_i\qquad(i=2,\ldots,N-1)\\
\vdots& &&&\vdots\\
   f_N-\xi_{N-1}&\le&P\uep_{N}&\le& f_N\qquad\mbox{ a.e. in }\Omega_T.\end{array}\right.
\end{eqnarray}
\end{prop}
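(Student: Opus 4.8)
The plan is to derive the three families of estimates \eqref{210}--\eqref{215} directly from the penalized system \eqref{prob aprox}, rigorously at the Galerkin level and then passing to the limit, exploiting that the penalization $\theta_\eps$ is bounded by $1$ and that $0\le\xi_i\in L^{p'}(\Omega)$.

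First I would establish \eqref{210} by the standard energy estimate: test the $i$-th equation of \eqref{prob aprox} with $\uep_i$ itself and sum over $i=1,\dots,N$. The parabolic term gives $\frac12\frac{d}{dt}\|\uep_i\|_{L^2(\Omega)}^2$, the $p$-Laplacian term gives $\int_\Omega|\gd\uep_i|^p$, and on the right I collect $\int_\Omega f_i\,\uep_i$ together with the penalization contributions. Rewriting the penalization sum as in Lemma \ref{Tmonotone}, namely $\sum_{i=1}^{N-1}\int_\Omega\xi_i\theta_\eps(\uep_i-\uep_{i+1})(\uep_i-\uep_{i+1})$, I observe that $\theta_\eps(s)\,s\le 0$ for all $s$ (since $\theta_\eps(s)$ and $s$ have opposite signs or $\theta_\eps(s)=0$), so this whole term is $\le 0$ and can be discarded. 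What remains is controlled by Young's and Poincaré's inequalities: $\int_\Omega f_i\uep_i\le\frac1{p'}\|f_i\|_{L^{p'}}^{p'}\cdot C+\frac1p\|\gd\uep_i\|_{L^p}^p$ (after absorbing the Poincaré constant), so a fraction of the good $\int_\Omega|\gd\uep_i|^p$ term is absorbed on the left. Integrating in $t$ and using $\boldsymbol{h}\in\boldsymbol{L}^2(\Omega)$ for the initial data yields both bounds in \eqref{210}, with $C$ independent of $\eps$.

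Next, \eqref{211} follows by comparison in the equation: from \eqref{prob aprox}, $\partial_t\uep_i=f_i+\gd\cdot\gd_p\uep_i-\xi_i\theta_\eps(\uep_i-\uep_{i+1})+\xi_{i-1}\theta_\eps(\uep_{i-1}-\uep_i)$. The term $\gd\cdot\gd_p\uep_i$ is bounded in $L^{p'}(0,T;W^{-1,p'}(\Omega))$ because $|\gd_p\uep_i|=|\gd\uep_i|^{p-1}$ is bounded in $L^{p'}(\Omega_T)$ by \eqref{210}; the penalization terms are bounded in $L^{p'}(\Omega_T)$ since $|\theta_\eps|\le 1$ and $\xi_{i-1},\xi_i\in L^{p'}(\Omega)$; and $f_i\in L^{p'}(\Omega_T)$ by assumption \eqref{f}. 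Summing these contributions in the dual norm gives \eqref{211}.

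The main obstacle is \eqref{215}, the Lewy--Stampacchia-type two-sided bounds on $P\uep_i$. From the equation, $P\uep_i=f_i-\xi_i\theta_\eps(\uep_i-\uep_{i+1})+\xi_{i-1}\theta_\eps(\uep_{i-1}-\uep_i)$, and since $-1\le\theta_\eps\le 0$ we have $0\le-\xi_i\theta_\eps(\uep_i-\uep_{i+1})\le\xi_i$ and $-\xi_{i-1}\le\xi_{i-1}\theta_\eps(\uep_{i-1}-\uep_i)\le 0$. Reading off the extremes term by term gives exactly $f_i-\xi_{i-1}\le P\uep_i\le f_i+\xi_i$, with the conventions $\xi_0=0$ at the top (so the first line reads $f_1\le P\uep_1\le f_1+\xi_1$, recalling from \eqref{xiis} that the relevant boundary penalization vanishes) and $\xi_N=0$ at the bottom via $u_{N+1}\equiv-\infty$ (so $\theta_\eps(\uep_N-\uep_{N+1})=\theta_\eps(+\infty)=0$, giving $f_N-\xi_{N-1}\le P\uep_N\le f_N$). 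The delicate point is justifying that these pointwise-in-the-equation bounds hold \emph{a.e.}\ in $\Omega_T$ and are genuinely $\eps$-uniform: the uniformity is immediate because the bounding functions $\xi_{i-1},\xi_i$ do not depend on $\eps$, so \eqref{215} is in fact the easiest of the three to state once the sign analysis of $\theta_\eps$ is in hand, and its real value is that it provides $\eps$-uniform control of $P\uep_i$ in $L^{p'}(\Omega_T)$ needed for the passage to the limit.
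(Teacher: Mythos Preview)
Your approach is essentially that of the paper, only reordered: the paper first reads off \eqref{215} from the equation and the fact that $-1\le\theta_\eps\le 0$, then uses the resulting uniform $L^{p'}$ bound on $g_i^\eps=\xi_{i-1}\theta_\eps(\uep_{i-1}-\uep_i)-\xi_i\theta_\eps(\uep_i-\uep_{i+1})$ to do the energy estimate \eqref{210} component by component (no summation over $i$, no sign analysis of the penalization needed), and finally deduces \eqref{211}. You instead begin with \eqref{210} by summing over $i$ and discarding the penalization by sign. Both routes are fine and short.

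There is, however, a sign slip in your argument for \eqref{210} that you should fix. You write ``$\theta_\eps(s)\,s\le 0$ since $\theta_\eps(s)$ and $s$ have opposite signs''. In fact, for $s<0$ one has $\theta_\eps(s)\le 0$ as well, so $\theta_\eps(s)\,s\ge 0$ for every $s$. Consequently the penalization contribution, when kept on the \emph{left} as $\langle B\bu^\eps,\bu^\eps\rangle=\sum_{i=1}^{N-1}\int_\Omega\xi_i\,\theta_\eps(\uep_i-\uep_{i+1})(\uep_i-\uep_{i+1})\ge 0$, can be dropped; equivalently, moved to the right it is $\le 0$. Your conclusion (``discard it'') is correct, but the stated reason is reversed.

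A second minor point: in your treatment of \eqref{215} you write ``with the conventions $\xi_0=0$'' and ``$\xi_N=0$''. Neither is generally true: from \eqref{xiis}, $\xi_0$ is a maximum of averages of the $f_i$'s and typically nonzero. What makes the boundary terms disappear is exactly what you note parenthetically, namely $\theta_\eps(u_0^\eps-\uep_1)=\theta_\eps(+\infty)=0$ and $\theta_\eps(\uep_N-u_{N+1}^\eps)=\theta_\eps(+\infty)=0$, via the conventions $u_0\equiv+\infty$, $u_{N+1}\equiv-\infty$. Just drop the incorrect assertions about $\xi_0$ and $\xi_N$.
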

\begin{proof} For each $i=1,\ldots, N$, we easily conclude (\ref{215}) from (\ref{prob aprox}) in the form
$$P\uep_i=f_i+g^\eps_i\qquad\mbox{ in }\Omega_T,$$
where
\begin{equation}\label{gieps} g_i^\varepsilon = \xi_{i-1}
\theta_\varepsilon \left( u_{i-1}^\varepsilon - u_i^\varepsilon
\right) -\xi_i \theta_\varepsilon \left( u_i^\varepsilon -
u_{i+1}^\varepsilon \right)
\end{equation}
is uniformly bounded in $\boldsymbol{L}^{p'}(\Omega_T)$.

Then, multiplying each equation in (\ref{prob aprox}) by $\uep_i$
and integrating on $\Omega_t=\Omega\times\,(0,t)$, we get
\begin{equation*}
\frac12\int_\Omega |\uep_i(t)|^2+\int_{\Omega_t}|\gd\uep_i|^p
\le\int_{\Omega_t}\big(f_i+g^{\eps}_i\big)\uep_i+\frac12\int_\Omega|\uep_i(0)|^2.
\end{equation*}

Using Poincaré inequality, we find
\begin{equation}\label{estimsimples}
\int_\Omega|\uep_i(t)|^2+\int_{\Omega_t}|\gd\uep_i|^p\le
C_0,
\end{equation}
where the constant $C_0$ only depends on
$\|\boldsymbol{h}\|_{\boldsymbol{L}^2(\Omega)}$ and
$\|\boldsymbol{f}\|_{\boldsymbol{L}^{p'}(\Omega_T)}$. Hence, from
(\ref{estimsimples}), we immediately obtain (\ref{210}). So
\begin{equation}\label{aa}\Delta_p\uep_i\ \mbox{ is bounded in
}  L^{p'}(0,T;W^{-1,p'}(\Omega))\mbox{ independently of }\eps,\end{equation}
and we  conclude
(\ref{211}) by recalling (\ref{215}).
\end{proof}

\begin{thm}\label{teo24}
Under assumption {\em (\ref{f})}, the problem {\em
(\ref{def3})-(\ref{ineq_var})} has a unique variational solution
$\boldsymbol{u}$ in the class {\em (\ref{def1})-(\ref{def2})}.

In addition, $\boldsymbol{u}^\eps\rightarrow\bu$ strongly in
$L^{p}(0,T;\boldsymbol{W}^{1,p}_0(\Omega))$ and
 \begin{equation}
\label{delta}\boldsymbol{P}\boldsymbol{u}^\eps\rightharpoonup
\boldsymbol{P}\boldsymbol{u} \qquad\mbox{ in }\qquad
\boldsymbol{L}^{p'}(\Omega_T)-\mbox{weak}.
\end{equation}
\end{thm}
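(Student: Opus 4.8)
The plan is to pass to the limit as $\eps \to 0$ in the penalized problem, using the uniform estimates \eqref{210}--\eqref{215}. First I would extract weakly convergent subsequences: from \eqref{210} and \eqref{211}, the sequence $\bu^\eps$ is bounded in $L^p(0,T;\boldsymbol{W}^{1,p}_0(\Omega))$ with $\partial_t \bu^\eps$ bounded in $L^{p'}(0,T;\boldsymbol{W}^{-1,p'}(\Omega))$, so by the Aubin--Lions compactness lemma I obtain (up to a subsequence) $\bu^\eps \to \bu$ strongly in $\boldsymbol{L}^p(\Omega_T)$ and a.e., $\gd \bu^\eps \rightharpoonup \gd\bu$ weakly in $\boldsymbol{L}^p(\Omega_T)$, and $\partial_t\bu^\eps \rightharpoonup \partial_t\bu$ weakly. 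The a.e.\ convergence together with the estimate \eqref{sol_aprox}, namely $\uep_i \le \uep_{i-1}+\eps$, passes to the limit to give $u_i \le u_{i-1}$ a.e., so $\bu(t)\in\K$ for a.e.\ $t$; the initial condition $\bu(0)=\boldsymbol{h}$ follows from the continuity in $C([0,T];\boldsymbol{L}^2(\Omega))$.

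Next I would derive the variational inequality. Rewriting \eqref{prob aprox} as $\boldsymbol{P}\bu^\eps = \bff + \boldsymbol{g}^\eps$ with $\boldsymbol{g}^\eps$ defined in \eqref{gieps}, I test against $\boldsymbol{v}-\bu^\eps$ for arbitrary $\boldsymbol{v}\in\K$. The key sign observation is that for $\boldsymbol{v}\in\K$ one has $v_i - v_{i+1}\ge 0$, hence $\theta_\eps(v_i-v_{i+1})=0$, so the penalization term $\langle B\boldsymbol{v}-B\bu^\eps,\,\boldsymbol{v}-\bu^\eps\rangle \ge 0$ by an argument parallel to Lemma \ref{Tmonotone}; dropping this nonnegative term yields
\begin{equation*}
\langle \partial_t\bu^\eps,\boldsymbol{v}-\bu^\eps\rangle + \int_\Omega \gd_p\bu^\eps : \gd(\boldsymbol{v}-\bu^\eps) \ge \int_\Omega \bff\cdot(\boldsymbol{v}-\bu^\eps).
\end{equation*}

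The main obstacle is the passage to the limit in the quasilinear term $\int_\Omega \gd_p\bu^\eps:\gd\bu^\eps$, since weak convergence of gradients does not control the nonlinear operator. I would handle this by the standard Minty--Browder monotonicity trick adapted to the parabolic setting: using $\int_0^T\langle\partial_t\bu^\eps,\bu^\eps\rangle = \tfrac12\|\bu^\eps(T)\|^2_{\boldsymbol{L}^2}-\tfrac12\|\boldsymbol{h}\|^2_{\boldsymbol{L}^2}$ and lower semicontinuity of the norm at time $T$, I establish $\limsup_\eps \int_{\Omega_T}\gd_p\bu^\eps:\gd\bu^\eps \le \int_{\Omega_T}\gd_p\bu:\gd\bu$, which combined with the monotonicity inequality $\int_{\Omega_T}(\gd_p\bu^\eps-\gd_p\boldsymbol{w}):\gd(\bu^\eps-\boldsymbol{w})\ge 0$ identifies the weak limit of $\gd_p\bu^\eps$ as $\gd_p\bu$ and gives the variational inequality \eqref{ineq_var}. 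The strong convergence of gradients then follows from the uniform monotonicity (the $p$-structure) of $\gd_p$: the quantity $\int_{\Omega_T}(\gd_p\bu^\eps-\gd_p\bu):\gd(\bu^\eps-\bu)$ tends to zero and controls $\|\gd(\bu^\eps-\bu)\|^p_{\boldsymbol{L}^p}$ for $p\ge 2$ (with the usual modification for $1<p<2$), giving the strong convergence in $L^p(0,T;\boldsymbol{W}^{1,p}_0(\Omega))$. Finally, \eqref{delta} follows since $\boldsymbol{P}\bu^\eps=\bff+\boldsymbol{g}^\eps$ is bounded in $\boldsymbol{L}^{p'}(\Omega_T)$ and $\boldsymbol{g}^\eps$ converges weakly; uniqueness is already guaranteed by the comparison argument noted after \eqref{ineq_var}, so the whole sequence (not merely a subsequence) converges.
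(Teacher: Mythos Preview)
Your proposal is correct and follows essentially the same route as the paper: uniform bounds, Aubin--Lions compactness, the sign observation that $B\boldsymbol{v}=0$ for $\boldsymbol{v}\in\K$ (hence the penalty can be dropped by monotonicity of $B$), a Minty--Browder argument to identify the nonlinear limit, and then strong gradient convergence from the $p$-structure.

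The only organizational difference is in the Minty step. The paper first transfers the time derivative to the test function via integration by parts (restricting to $\boldsymbol{v}$ with $\boldsymbol{v}(0)=\boldsymbol{h}$ and $\partial_t\boldsymbol{v}\in L^{p'}(0,T;\boldsymbol{W}^{-1,p'})$), replaces $\nabla_p\boldsymbol{u}^\eps$ by $\nabla_p\boldsymbol{v}$ via monotonicity, passes to the limit in the now-linear inequality, and only then applies the Minty device $\boldsymbol{w}=\boldsymbol{u}+\theta(\boldsymbol{v}-\boldsymbol{u})$; the strong convergence of gradients is obtained afterwards by testing the equation against $\boldsymbol{u}^\eps-\boldsymbol{u}$ and invoking \cite{bm}. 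Your variant instead uses the energy identity and lower semicontinuity of $\|\boldsymbol{u}^\eps(T)\|_{\boldsymbol{L}^2}^2$ to control $\limsup_\eps\int_{\Omega_T}\nabla_p\boldsymbol{u}^\eps:\nabla\boldsymbol{u}^\eps$ directly. One small imprecision: the Minty hypothesis you need is $\limsup_\eps\int_{\Omega_T}\nabla_p\boldsymbol{u}^\eps:\nabla\boldsymbol{u}^\eps\le\int_{\Omega_T}\boldsymbol{\chi}:\nabla\boldsymbol{u}$, where $\boldsymbol{\chi}$ is the weak $L^{p'}$-limit of $\nabla_p\boldsymbol{u}^\eps$; the identification $\boldsymbol{\chi}=\nabla_p\boldsymbol{u}$ is the \emph{output} of Minty, not its input.
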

\begin{proof} If $\{\boldsymbol{u}^\eps\}_\eps$ is a sequence of solutions of the approximating
problems (\ref{prob aprox}), by the {\em a priori} estimates
(\ref{210}) and (\ref{211}), we can extract a subsequence such that,
as $\eps\rightarrow 0$,
$$\boldsymbol{u}^\eps\rightharpoonup \boldsymbol{u}\qquad\mbox{ in }\qquad
L^p(0,T;\boldsymbol{W}^{1,p}_0(\Omega))-\mbox{weak},$$
$$\partial_t\boldsymbol{u}^\eps\rightharpoonup \partial_t\boldsymbol{u}
\qquad\mbox{ in }\qquad
L^{p'}(0,T;\boldsymbol{W}^{-1,p'}(\Omega))-\mbox{weak},$$ and, by compactness,
also $\boldsymbol{u}^\eps\rightarrow \boldsymbol{u}$ strongly in
$\boldsymbol{L}^p(\Omega_T)$.

Let $\boldsymbol{v}\in L^p(0,T;\boldsymbol{W}^{1,p}_0(\Omega))$ be
such that $\partial_t\boldsymbol{v}\in
L^{p'}(0,T;\boldsymbol{W}^{-1,p'}(\Omega))$,
$\boldsymbol{v}(t)\in\K$, for a.e. $t\in\,(0,T)$, and
$\boldsymbol{v}(0)=\boldsymbol{h}$. As $\langle
B\boldsymbol{v}(t),\boldsymbol{v}(t)-\bu(t)\rangle= 0$, we have
\begin{equation*}
\langle\partial_t\boldsymbol{u}^\eps,\boldsymbol{v}-\boldsymbol{u}^\eps\rangle
+\int_{\Omega_T}\gd_p\boldsymbol{u}^\eps:\gd\big(\boldsymbol{v}-\boldsymbol{u}^\eps\big)
\ge\int_{\Omega_T}\boldsymbol{f}
\cdot\big(\boldsymbol{v}-\boldsymbol{u}^\eps\big)
\end{equation*}

It follows from integration by parts that
\begin{equation*}
\langle\partial_t\boldsymbol{u}^\eps,\boldsymbol{v}-\boldsymbol{u}^\eps\rangle=
\langle\partial_t\bv,\bv-\boldsymbol{u}^\eps\rangle-\frac12\int_\Omega|
\bu^\eps(T)-\bv(T)|^2
\end{equation*}
and, using the monotonicity, we get
\begin{alignat*}{1}
\langle\partial_t\boldsymbol{v},\boldsymbol{v}-\boldsymbol{u}^\eps\rangle
+\int_{\Omega_T}\gd_p\boldsymbol{v}:\gd\big(\boldsymbol{v}-\boldsymbol{u}^\eps\big)
\ge &\int_{\Omega_T}\boldsymbol{f}
\cdot\big(\boldsymbol{v}-\boldsymbol{u}^\eps\big)+\frac12\int_\Omega|
\bu^\eps(T)-\bv(T)|^2\\
\ge &\int_{\Omega_T}\boldsymbol{f}
\cdot\big(\boldsymbol{v}-\boldsymbol{u}^\eps\big).
\end{alignat*}
Letting $\eps\rightarrow 0$, we obtain
\begin{equation}
\label{monov}
\langle\partial_t\boldsymbol{v},\boldsymbol{v}-\boldsymbol{u}\rangle
+\int_{\Omega_T}\gd_p\boldsymbol{v}:\gd\big(\boldsymbol{v}-\boldsymbol{u}\big)\\
\ge\int_{\Omega_T}\boldsymbol{f}
\cdot\big(\boldsymbol{v}-\boldsymbol{u}\big).
\end{equation}

Now, let $\boldsymbol{w}=\bu+\theta(\bv-\bu)$, $\theta\in\,(0,1]$.
The verification that $\boldsymbol{w}$ can be chosen as test
function in (\ref{monov}) is immediate. So,
\begin{equation*}
\langle
\partial_t\bu+\theta\partial_t(\bv-\bu),\theta(\bv-\bu)\rangle+
\int_{\Omega_T}\nabla_p\big(\bu+\theta(\bv-\bu)\big):\theta\nabla(\bv-\bu)
\ge\int_{\Omega_T}\theta\boldsymbol{f}\cdot(\bv-\bu).
\end{equation*}

Dividing both members by $\theta$ and letting $\theta\rightarrow 0$, we see that $\bu$ solves
the problem
$$\langle
\partial_t\bu,\bv-\bu\rangle+\int_{\Omega_T}\nabla_p\bu:\nabla(\bv-\bu)
\ge\int_{\Omega_T}\boldsymbol{f}\cdot(\bv-\bu),
$$
for all $\bv$ such that $\bv\in
L^p(0,T;\boldsymbol{W}^{1,p}_0(\Omega))$, $\bv(t)\in\K$ for a.e.
$t\in (0,T)$ and $\bv(0)=\boldsymbol{h}$. Using standard arguments
(see \cite{l}), also
$$\langle
\partial_t\bu(t),\bv(t)-\bu(t)\rangle+\int_{\Omega}\nabla_p\bu(t):\nabla(\bv(t)-\bu(t))
\ge\int_{\Omega}\boldsymbol{f}(t)\cdot(\bv(t)-\bu(t)),
$$
for a.e. $t\in (0,T)$, for all $\bv$ such that $\bv\in
L^p(0,T;\boldsymbol{W}^{1,p}_0(\Omega))$ and $\bv(t)\in\K$.

In order to conclude (\ref{delta}) it is sufficient to recall the
estimates (\ref{215}) for $P\uep_i$ and that
$\nabla_p\uep_i\rightarrow\nabla_p u_i$ in an appropriate sense. In
fact, recalling (\ref{gieps}) and using equation (\ref{prob aprox}),
we conclude that
$$\limsup_{\eps\rightarrow 0}\int_{\Omega_T}\gd_p\boldsymbol{u}^\eps\cdot\gd(\boldsymbol{u}^\eps-\boldsymbol{u})$$
$$\le \limsup_{\eps\rightarrow
0}\left[\int_{\Omega_T}(\bff+\boldsymbol{g}^\eps)\cdot(\boldsymbol{u}^\eps-\boldsymbol{u})-
\langle\partial_t\boldsymbol{u}^\eps,\boldsymbol{u}-\boldsymbol{u}^\eps\rangle\right]=0.$$

By well-known results (see, for instance, \cite{bm}) this is
sufficient to show that
$\boldsymbol{u}^\eps\rightarrow\boldsymbol{u}$ strongly in
$L^p(0,T;\boldsymbol{W}^{1,p}_0(\Omega))$  (notice that
$\boldsymbol{g}^\eps \rightharpoonup \boldsymbol{g}$ weakly in
$\boldsymbol{L}^{p'}(\Omega_T)$, for some~$\boldsymbol{g}$).
\end{proof}

\begin{rem}
If we assume also that
$\boldsymbol{f}\in\boldsymbol{L}^2(\Omega_T)$, which is a
consequence of {\em (\ref{f})} if $1<p\le 2$, the Faedo-Galerkin
approach yields directly the regularity
\begin{equation}
\boldsymbol{u}\in H^1(0,T;\boldsymbol{L}^2(\Omega_T))\cap
L^\infty(0,T;\boldsymbol{W}^{1,p}_0(\Omega))
\end{equation}
through multiplication of {\em (\ref{prob aprox})} by
$\partial_t\uep_i$.
\end{rem}

\vspace{3mm}

\subsection{Strong continuous dependence}\label{24}

\begin{thm} \label{tap}
Let $\boldsymbol{u}^*$ be the variational solution to {\em (\ref{def3})-(\ref{ineq_var})}
corresponding to data  $\boldsymbol{f}^*$ and $\boldsymbol{h}^*$ satisfying also {\em (\ref{f})} and
denote
$$\eps^*\equiv\, \|\boldsymbol{f}^*-\boldsymbol{f}\|^{q}_{\boldsymbol{L}^{q}(\Omega_T)}+
\|\boldsymbol{h}^*-\boldsymbol{h}\|^{2}_{\boldsymbol{L}^{2}(\Omega)},$$
with $q=p'\wedge 2$ {\em (}i.e. $q=p'$ if $p>2$ and $q=2$ if $p\le
2${\em )}. Then there exists a positive constant $c=c(T,p)$ such
that
\begin{equation}
\sup_{0<t<T}\int_\Omega|\bu^*(t)-\bu(t)|^2+\int_{\Omega_T}|\nabla(\bu^*-\bu)|^p\le
c\,\eps^*\qquad\mbox{if $p\ge 2$,}
\end{equation}
\begin{equation}\label{p<2}
\sup_{0<t<T}\int_\Omega|\bu^*(t)-\bu(t)|^2+\Big(\int_{\Omega_T}|\nabla(\bu^*-\bu)|^p\Big)^{\frac2p}\le
c\,\eps^*\qquad\mbox{
if $1<p<2$}.
\end{equation}
\end{thm}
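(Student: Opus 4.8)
The plan is to use the variational inequality \eqref{ineq_var} in the pointwise-in-time form established at the end of the proof of Theorem~\ref{teo24}, testing each problem with the solution of the other. Since $\bu(t),\bu^*(t)\in\K$ for a.e.\ $t\in(0,T)$, I would take $\bv(t)=\bu^*(t)$ in the inequality solved by $\bu$ (with datum $\bff$) and $\bv(t)=\bu(t)$ in the one solved by $\bu^*$ (with datum $\bff^*$), and add them. Setting $\boldsymbol{w}=\bu^*-\bu$, the two duality pairings combine into $\langle\partial_t\boldsymbol{w}(t),\boldsymbol{w}(t)\rangle$; since $\boldsymbol{w}\in L^p(0,T;\boldsymbol{W}^{1,p}_0(\Omega))$ with $\partial_t\boldsymbol{w}\in L^{p'}(0,T;\boldsymbol{W}^{-1,p'}(\Omega))$ and $\boldsymbol{w}\in C([0,T];\boldsymbol{L}^2(\Omega))$, the standard integration-by-parts lemma identifies this with $\frac12\frac{d}{dt}\int_\Omega|\boldsymbol{w}(t)|^2$. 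Integrating in time from $0$ to $t$ and recalling $\boldsymbol{w}(0)=\boldsymbol{h}^*-\boldsymbol{h}$, I obtain the basic energy inequality
\[
\frac12\int_\Omega|\boldsymbol{w}(t)|^2+\int_{\Omega_t}(\gd_p\bu^*-\gd_p\bu):\gd\boldsymbol{w}\le\frac12\|\boldsymbol{h}^*-\boldsymbol{h}\|^2_{\boldsymbol{L}^2(\Omega)}+\int_{\Omega_t}(\bff^*-\bff)\cdot\boldsymbol{w}.
\]

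The dissipation term is then estimated from below by the classical algebraic inequalities for $\xi\mapsto|\xi|^{p-2}\xi$, applied componentwise and summed. For $p\ge2$ one has $(\gd_p\bu^*-\gd_p\bu):\gd\boldsymbol{w}\ge c_p|\gd\boldsymbol{w}|^p$, so the left-hand side controls $c_p\int_{\Omega_t}|\gd\boldsymbol{w}|^p$. On the right I would use H\"older, Poincar\'e ($\|\boldsymbol{w}\|_{\boldsymbol{L}^p(\Omega_t)}\le C\|\gd\boldsymbol{w}\|_{\boldsymbol{L}^p(\Omega_t)}$) and Young's inequality with exponents $(p',p)$ to bound $\int_{\Omega_t}(\bff^*-\bff)\cdot\boldsymbol{w}$ by $\frac{c_p}2\int_{\Omega_t}|\gd\boldsymbol{w}|^p+C\|\bff^*-\bff\|^{p'}_{\boldsymbol{L}^{p'}(\Omega_T)}$, absorbing the gradient part on the left. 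Dropping the nonnegative $L^2$-term when needed and taking the supremum in $t$ then gives both claimed bounds by $c\,\eps^*$, since here $q=p'$. No Gronwall argument is required in this range because the dissipation directly dominates $\|\gd\boldsymbol{w}\|^p_{\boldsymbol{L}^p}$.

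The hard part is the singular range $1<p<2$, where monotonicity only yields the weighted bound $(\gd_p\bu^*-\gd_p\bu):\gd\boldsymbol{w}\ge c_p\,|\gd\boldsymbol{w}|^2(|\gd\bu^*|+|\gd\bu|)^{p-2}$, which is not coercive on $|\gd\boldsymbol{w}|^p$. To recover a gradient estimate I would interpolate via H\"older with exponents $(\frac2p,\frac2{2-p})$,
\[
\int_{\Omega_t}|\gd\boldsymbol{w}|^p\le\Big(\int_{\Omega_t}\frac{|\gd\boldsymbol{w}|^2}{(|\gd\bu^*|+|\gd\bu|)^{2-p}}\Big)^{\frac p2}\Big(\int_{\Omega_t}(|\gd\bu^*|+|\gd\bu|)^p\Big)^{\frac{2-p}2},
\]
whose last factor is finite since $\bu,\bu^*\in L^p(0,T;\boldsymbol{W}^{1,p}_0(\Omega))$, cf.\ \eqref{def1}. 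Combining this with the monotonicity lower bound and raising to the power $2/p$ produces $\big(\int_{\Omega_t}|\gd\boldsymbol{w}|^p\big)^{2/p}\le C\int_{\Omega_t}(\gd_p\bu^*-\gd_p\bu):\gd\boldsymbol{w}$.

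Finally, for $1<p<2$ I would close the $L^2$ estimate separately. Since $q=2$ here and $\bff^*-\bff\in\boldsymbol{L}^2(\Omega_T)$, the source is controlled by Young as $\frac12\|\bff^*-\bff\|^2_{\boldsymbol{L}^2(\Omega_t)}+\frac12\int_0^t\|\boldsymbol{w}(s)\|^2_{\boldsymbol{L}^2(\Omega)}\,ds$; discarding the nonnegative dissipation and applying Gronwall's lemma to $t\mapsto\int_\Omega|\boldsymbol{w}(t)|^2$ gives $\sup_{0<t<T}\int_\Omega|\boldsymbol{w}(t)|^2\le c\,\eps^*$. Reinserting this into the energy inequality, now keeping the dissipation, bounds $\int_{\Omega_T}(\gd_p\bu^*-\gd_p\bu):\gd\boldsymbol{w}$ by $c\,\eps^*$, and the interpolation estimate above then yields $\big(\int_{\Omega_T}|\gd\boldsymbol{w}|^p\big)^{2/p}\le c\,\eps^*$, which is \eqref{p<2}. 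The only genuine subtlety is the $1<p<2$ interpolation step, which forces the gradient norm to appear with the exponent $2/p$ rather than $1$, exactly as in the statement.
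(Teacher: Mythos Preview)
Your proof is correct and follows essentially the same route as the paper: test each variational inequality with the other solution, add, integrate in time to obtain the basic energy inequality, then split into the cases $p\ge 2$ (monotonicity gives direct coercivity, finish with H\"older--Poincar\'e--Young) and $1<p<2$ (Gronwall for the $L^2$ part, then bound the dissipation and recover the $L^p$ gradient via the interpolation trick). The only cosmetic difference is that the paper phrases the interpolation step as a ``reverse H\"older inequality'' with exponent $r=p/2<1$, whereas you write it as ordinary H\"older with the dual pair $(2/p,\,2/(2-p))$; these are the same computation.
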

\begin{proof}
Let $\boldsymbol{v}=\boldsymbol{u}^*(t)$ in (\ref{ineq_var}) with
data $\boldsymbol{f}$ and $\boldsymbol{h}$, and
$\boldsymbol{v}=\boldsymbol{u}(t)$ in (\ref{ineq_var}) with data
$\boldsymbol{f}^*$ and $\boldsymbol{h}^*$. In the latter case, we
have
\begin{multline}\label{mutuiep}
\langle\partial_t
\boldsymbol{u}^*(t),\boldsymbol{u}(t)-\boldsymbol{u}^*(t)\rangle+\int_\Omega
\gd_p\boldsymbol{u}^*(t):\gd(\boldsymbol{u}(t)-\boldsymbol{u}^*(t))\\
\ge\int_\Omega\boldsymbol{f}^*(t)\cdot(\boldsymbol{u}(t)-\boldsymbol{u}^*(t)).
\end{multline}
From (\ref{ineq_var}) and (\ref{mutuiep}), integrating between $0$
and $t$, we obtain
\begin{multline}
\label{mutumuiep}
\frac12\int_\Omega|\boldsymbol{u}^*(t)-\boldsymbol{u}(t)|^2+\int_0^t\int_\Omega
(\gd_p\boldsymbol{u}^*-\gd_p\boldsymbol{u}):\gd(\boldsymbol{u}^*-\boldsymbol{u})\\
\le\int_0^t\left(\boldsymbol{f}^*-\bff\right)\cdot\left(\boldsymbol{u}^*-\boldsymbol{u}\right)+\frac12\int_\Omega|
\boldsymbol{h}^*-\boldsymbol{h}|^2.
\end{multline}

In the case $p\ge 2$, since
$$\int_0^t\int_\Omega
(\gd_p\boldsymbol{u}^*-\gd_p\boldsymbol{u}):\gd(\boldsymbol{u}^*-\boldsymbol{u})\ge C_p\int_0^t\int_\Omega
|\gd(\boldsymbol{u}^*-\boldsymbol{u})|^p,$$  the conclusion
follows easily by using H\"{o}lder and Poincaré inequalities.

In the case $1<p<2$, from (\ref{mutumuiep}) we find
$$\int_\Omega|\boldsymbol{u}^*(t)-\boldsymbol{u}(t)|^2\le \eps^*+\int_0^t\int_\Omega|\boldsymbol{u}^*-\boldsymbol{u}|^2,$$
which, by Gronwall inequality yields, first
$$\sup_{0<t<T}\int_\Omega|\boldsymbol{u}^*(t)-\boldsymbol{u}(t)|^2\le e^T\eps^*$$
and, afterwards
\begin{equation}
\int_0^T\int_\Omega\left(\gd_p\boldsymbol{u}^*-\gd_p\boldsymbol{u}\right):\gd(\boldsymbol{u}^*-\boldsymbol{u})\le\frac12(1+Te^T)\eps^*.
\end{equation}
Next we consider the following reverse H\"{o}lder inequality: given
$0<r<1$ and $r'=\frac{r}{r-1}$, if $F\in L^r(\Omega)$, $FG\in
L^1(\Omega)$ and $\displaystyle{\int_\Omega|G(x)|^{r'}dx<\infty}$ in
$\Omega_T$, one has
\begin{equation*}
\left(\int_\Omega|F(x)|^rdx\right)^{\frac1r}\le
\left(\int_\Omega|F(x)G(x)|dx\right)\left(\int_\Omega|G(x)|^{r'}dx\right)^{-\frac1{r'}}.
\end{equation*}
Letting $r=\frac{p}2$ and, for $i=1,\ldots,N$,
$F=|\gd(u^*_i-u_i)|^2$ we get
\begin{equation*}
\begin{array}{l}
\displaystyle{\int_{\hat{\Omega}_t^i}
(\gd_p u^*_i-\gd_p u_i)\cdot\gd(u^*_i-u_i)}\ge\displaystyle{\int_{\hat{\Omega}_t^i}\frac{|\gd(u^*_i-u_i)|^2}
{(|\gd u^*_i|+|\gd u_i|)^{2-p}}}\vspace{3mm}\\
\hspace{2cm}
\displaystyle{\ge\left(\int_{\hat{\Omega}_t^i}|\gd(u^*_i-u_i)|^p\right)^{\frac2p}
\left(\int_{\hat{\Omega}_t^i}\big(|\gd u^*_i|+|\gd
u_i|\big)^p\right)^{\frac{p-2}p}},
\end{array}
\end{equation*}
where $\hat{\Omega}_t^i=\{(x,t)\in\Omega_t:|\gd u^*_i|+|\gd
u_i|>0\}$. Thus, if we denote
$$\alpha_p\ge \left(\int_{\Omega_T}\big(|\gd u^*_i|+|\gd
u_i|\big)^p\right)^{\frac{2-p}p},$$ by (\ref{mutumuiep}), we conclude (\ref{p<2}) from
\begin{equation*}
\sum_{i=1}^N\Big(\int_{\Omega_T}|\nabla(u^*_i-u_i)|^p\Big)^{\frac2p}\\
\le\alpha_p\sum_{i=1}^N\int_{\Omega_T}(\gd_p u^*_i-\gd_p u_i)\cdot\gd(u^*_i-u_i)\le \alpha_p\,c\,\eps^*.
\end{equation*}
\end{proof}

\vspace{3mm}

\subsection{H\"{o}lder continuity and further regularity of the solution}

The regularity of the variational solutions of the evolution
$N$-membranes problem does not, in general, yield their boundedness
for $1<p\leq d$; but, by Sobolev imbedding, the solutions are
bounded for $p>d$ and even H\"{o}lder continuous in the space
variables for each $t \in (0,T)$.

However, estimates (\ref{215}) and (\ref{delta}) imply that, in
fact, $P\boldsymbol{u}$ has the same regularity in $\Omega_T$ as the
data $\bff$. Then, if $\bff\in\boldsymbol{L}^\infty(\Omega_T)$,
local and global H\"{o}lder estimates for the evolution $p$-Laplace
equation may be directly applied to bounded solutions of the
$N$-membranes problem (see \cite{livro do DiBenedetto}, \cite{lib}
or \cite{duv}). In order to illustrate these results, we assume in
addition that $\boldsymbol{h} \in \boldsymbol{L}^\infty (\Omega)$,
which also implies that $\boldsymbol{u} \in \boldsymbol{L}^\infty
(\Omega_T)$,
 and consequently that $\boldsymbol{u}$ and $\nabla
\boldsymbol{u}$ are locally H\"{o}lder continuous. Referring to
\cite{chen-dib} and \cite{lib} for the boundary and initial
regularity in the space of H\"{o}lder continuous functions
$C^\alpha$, $0<\alpha <1$, with the standard parabolic norms, we may
state the following result:

\begin{thm}
Suppose $\bff\in \boldsymbol{L}^\infty (\Omega)$ and the initial
data $\boldsymbol{h} \in \boldsymbol{C}^\alpha
(\overline{\Omega})\,\cap\,\K$, $0< \alpha <1$. Then the solution
$\boldsymbol{u} \in \boldsymbol{C}^{\alpha^\prime}
(\overline{\Omega}_T)$, $0<\alpha^\prime \leq \alpha <1$, and
$\nabla \boldsymbol{u} \in \boldsymbol{C}_{\mathrm{loc}}^{\beta}
(\overline{\Omega}_T)$, for some $0<\beta<1$. If, in addition,
$\partial \Omega \in \boldsymbol{C}^{1,\beta}$ and $\nabla
\boldsymbol{h} \in \boldsymbol{C}^\beta (\overline{\Omega})$,
$0<\beta<1$, then also $\nabla \boldsymbol{u} \in
\boldsymbol{C}^{\beta^\prime} (\overline{\Omega}_T)$, for some
$0<\beta^\prime \leq \beta<1$.
\end{thm}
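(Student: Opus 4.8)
The plan is to reduce the vector variational inequality to $N$ decoupled scalar equations for the $p$-parabolic operator $P$, each with a bounded right-hand side, and then to invoke the known interior and boundary regularity theory for the evolution $p$-Laplacian. The entire difficulty of passing from the inequality to an equation has already been absorbed into the estimates \eqref{215} and the weak convergence \eqref{delta}; so the argument here is essentially a matter of checking that the data are good enough for the scalar theory to apply up to the parabolic boundary.

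First I would make the reduction explicit. Recalling \eqref{gieps}, each component of the penalized solution satisfies $P\uep_i = f_i + g_i^\eps$, and since $|\theta_\eps|\le 1$ and $\xi_i\ge 0$ we have the pointwise bound $|g_i^\eps|\le \xi_{i-1}+\xi_i$. The functions $\xi_i$ are determined by the $f_j$ through \eqref{xiis}, so the hypothesis that $\bff$ is bounded forces $\xi_i\in L^\infty(\Omega_T)$ and hence $g_i^\eps$ bounded in $L^\infty(\Omega_T)$ uniformly in $\eps$. By weak-$*$ compactness the weak $\boldsymbol{L}^{p'}$-limit $\boldsymbol{g}^\eps\rightharpoonup\boldsymbol{g}$ of \eqref{delta} in fact lies in $L^\infty$, so the limit solution satisfies
$$P u_i = f_i + g_i \quad\text{in }\Omega_T,\qquad g_i\in L^\infty(\Omega_T),$$
whence $F_i:=P u_i\in L^\infty(\Omega_T)$; that is, $\boldsymbol{P}\bu$ inherits the boundedness of $\bff$. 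Since $h_i\in C^\alpha(\overline{\Omega})\subset L^\infty(\Omega)$ and the lateral data vanish, the standard $L^\infty$-estimate for the scalar $p$-parabolic equation (as already observed above) gives $u_i\in L^\infty(\Omega_T)$, so each $u_i$ is a bounded weak solution of a scalar equation $\partial_t u_i - \Delta_p u_i = F_i$ with $F_i\in L^\infty$.

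With this in hand I would quote the scalar theory componentwise. For the interior regularity, DiBenedetto's intrinsic-scaling method for degenerate and singular parabolic equations (\cite{livro do DiBenedetto}, \cite{lib}, \cite{duv}) yields, for bounded weak solutions with $L^\infty$ right-hand side, local H\"older continuity of $u_i$ and of $\gd u_i$, giving $\bu$ and $\gd\bu$ locally H\"older in $\Omega_T$. For the continuity up to the parabolic boundary the needed compatibility is automatic: since $\boldsymbol{h}\in\K\subset\boldsymbol{W}^{1,p}_0(\Omega)$ one has $h_i=0$ on $\partial\Omega$, matching the homogeneous lateral condition, while $h_i\in C^\alpha(\overline{\Omega})$ controls the initial trace; the boundary and initial H\"older estimates of Chen--DiBenedetto and Lieberman (\cite{chen-dib}, \cite{lib}) then upgrade this to $\bu\in\boldsymbol{C}^{\alpha'}(\overline{\Omega}_T)$ with $0<\alpha'\le\alpha$. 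Finally, under the extra smoothness $\partial\Omega\in C^{1,\beta}$ and $\gd\boldsymbol{h}\in\boldsymbol{C}^\beta(\overline{\Omega})$, Lieberman's global gradient estimates for the evolution $p$-Laplacian promote the interior gradient bound to the global one $\gd\bu\in\boldsymbol{C}^{\beta'}(\overline{\Omega}_T)$.

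The point to watch, rather than a genuine obstacle, is the reduction step: one must be sure that the limiting reaction term $\boldsymbol{g}$ is not merely $\boldsymbol{L}^{p'}$ but actually $L^\infty$, and that the resulting scalar problems fall within the scope of the cited theorems across both the degenerate range $p>2$ and the singular range $1<p<2$. Both are clear here, the $L^\infty$ bound being inherited pointwise from $|g_i^\eps|\le\xi_{i-1}+\xi_i$ and surviving passage to the weak-$*$ limit, and DiBenedetto's theory being designed precisely to treat the two ranges in a unified way; beyond this verification the proof is a direct citation of the scalar regularity results.
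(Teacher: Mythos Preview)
Your proposal is correct and follows essentially the same route as the paper, which does not give a formal proof but explains in the paragraph preceding the theorem that estimates \eqref{215} and \eqref{delta} force $P\bu$ to inherit the $L^\infty$ regularity of $\bff$, after which one invokes the scalar H\"older theory of \cite{livro do DiBenedetto}, \cite{chen-dib}, and \cite{lib}. Your explicit pointwise bound $|g_i^\eps|\le\xi_{i-1}+\xi_i$ is just a restatement of \eqref{215}, so the two arguments coincide.
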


In case of a linear operator ($p=2$) we can apply directly
Solonnikov's parabolic estimates (see \cite{lsu}, Thm. 9.1 of page
341).

\begin{thm}
Let $p=2$. Then, for any $\bff\in\boldsymbol{L}^q(\Omega_T)$, $q\ge 2$, the solution $\bu$ to
{\em (\ref{def3})-(\ref{ineq_var})}  satisfies
 $\boldsymbol{u} \in
\boldsymbol{W}_{q, \mathrm{loc}}^{2,1} (\Omega_T)$, which implies,
by Sobolev imbeddings, that $\boldsymbol{u}$ and $\nabla
\boldsymbol{u}$ are locally H\"{o}lder continuous, respectively for
$q>\frac{d+2}d$ and $q>d+2$. If, in addition, $\bs h\in \K\cap\bs
W^{2-\frac2q,q}(\Omega)$, those results can be extended up to the
boundary $\partial \Omega \in C^2$ and up to $t=0$, i.e.,
$\boldsymbol{u} \in \boldsymbol{W}_{q}^{2,1} (\Omega_T)$ and $\bu$,
$\gd\bu$ are H\"{o}lder continuous on $\overline{\Omega}_T$.
\end{thm}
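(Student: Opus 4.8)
The plan is to freeze the reaction terms and treat the system as $N$ uncoupled scalar linear heat equations whose right-hand sides have the same integrability as $\bff$. Since $p=2$ we have $Pu_i=\partial_t u_i-\Delta u_i$, and writing $P\uep_i=f_i+g_i^\eps$ as in \eqref{gieps}, the bounds \eqref{215} pass to the limit by the weak convergence \eqref{delta}, so that the variational solution satisfies
\[
\partial_t u_i-\Delta u_i=F_i\quad\mbox{a.e. in }\Omega_T,\qquad F_i:=Pu_i,\qquad i=1,\dots,N,
\]
where $f_i-\xi_{i-1}\le F_i\le f_i+\xi_i$ a.e. (with the usual convention at $i=1$ and $i=N$). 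The first step is to record that $F_i\in\boldsymbol{L}^q(\Omega_T)$: by \eqref{xiis} each $\xi_i$ is a finite linear combination of partial sums of the $f_j$ and of the pointwise maximum of their averages, hence $\xi_i\in L^q(\Omega_T)$ whenever $\bff\in\boldsymbol{L}^q(\Omega_T)$, $q\ge2$, and the two-sided bound then forces $F_i$ into $L^q(\Omega_T)$ with the same exponent as the data.

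Once the sources $F_i$ are fixed in $L^q(\Omega_T)$ the components decouple, and each $u_i$ is a weak solution of a linear heat equation with an $L^q$ source. For the interior conclusion I would apply the local parabolic $L^q$-estimate of Solonnikov (\cite{lsu}, Thm.~9.1, p.~341). A priori one only has $u_i\in L^2(0,T;W^{1,2}_0(\Omega))\cap C([0,T];L^2(\Omega))$, so when $q>2$ the source integrability may exceed that of $u_i$; I would therefore run a finite bootstrap, alternating the local estimate (which upgrades $u_i\in L^s_{\mathrm{loc}}$, $s\le q$, to $u_i\in W^{2,1}_{s,\mathrm{loc}}$ using $F_i\in L^q$) with the parabolic Sobolev imbedding of $W^{2,1}_s$ into a higher $L^{s^*}$, raising the exponent in finitely many steps until $u_i\in W^{2,1}_{q,\mathrm{loc}}(\Omega_T)$.

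For the global statement I would invoke the corresponding initial-boundary value estimate, using $\partial\Omega\in C^2$ and the compatible data $h_i\in W^{2-\frac2q,q}(\Omega)$; the zero-order compatibility $h_i=0$ on $\partial\Omega$ holds automatically because $\bs h\in\K\subset\bs W^{1,2}_0(\Omega)$. Solonnikov's theorem then gives $u_i\in W^{2,1}_q(\Omega_T)$ up to $\partial_p\Omega_T$, whence the H\"older continuity of $\bu$ and $\gd\bu$ in the indicated ranges of $q$ follows from the parabolic Sobolev imbeddings of $W^{2,1}_q$. The one substantive point, where the membranes structure enters, is the very first step: that $P\bu$ inherits the $L^q$-integrability of $\bff$ via the uniform bounds \eqref{215} and their stability under \eqref{delta}. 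Once that is secured the result is a component-by-component application of the classical scalar linear theory, and the only technical care needed lies in the bootstrap raising the integrability of $u_i$ and in checking the compatibility conditions for the global estimate.
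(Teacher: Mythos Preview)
Your proposal is correct and follows the same line as the paper, which does not give a detailed proof but simply observes that estimates \eqref{215} pass to the limit via \eqref{delta} so that $P\bu$ inherits the $L^q$-integrability of $\bff$, after which Solonnikov's estimates (\cite{lsu}, Thm.~9.1, p.~341) apply component by component. Your added details---the bootstrap for the local estimate and the verification of the zero-order compatibility $\bs h|_{\partial\Omega}=0$---are reasonable technical elaborations of what the paper leaves implicit.
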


\vspace{5mm}

\section{The $N$-system and its stability}

The $N$-membranes problem can, a posteriori, be regarded as a lower
obstacle problem for $u_1$, a double obstacle problem for $u_j$,
$j=2\le j\le N-1$, and an upper obstacle problem for $u_N$. This
fact has interesting consequences and, similarly to the theory of
the obstacle problem that we recall briefly for completeness, allows
us to characterize the $N$-membranes problem as a nonlinear
parabolic system with known discontinuous nonlinearities on the
right hand side as in (\ref{react_diff}).

\vspace{3mm}

\subsection{Dual estimates for obstacle type problems}

We consider the scalar two-\mbox{-obstacles} problem for the nonlinear
operator $P$ defined in (\ref{operator}), with compatible
Cauchy-Dirichlet data on $\partial_p\Omega_T$. Let
\begin{equation}
\label{cvarphi}
\varphi\in L^{p'}(\Omega_T),\qquad\eta\in W^{1,p}_0(\Omega))\cap L^2(\Omega),
\end{equation}
\begin{equation}
\label{cpsi}
\psi_1,\,\psi_2\in L^p(0,T;W^{1,p}(\Omega)),\quad\psi_1\ge\psi_2\ \mbox{ in }\Omega_T,\quad
\psi_1\ge 0\ge\psi_2\mbox{ on }\Sigma_T,
\end{equation}
and, for $j=1,2$,
\begin{equation}
\label{cpsij}
\partial_t\psi_j\in L^{p'}(0,T;W^{-1,p'}(\Omega)),\quad P\psi_j\in L^{p'}(\Omega_T),\quad \psi_1(0)
\ge \eta\ge \psi_2(0)\mbox{ on }\Omega_0.
\end{equation}

Using the Lipschitz continuous function $\theta_\eps$ defined in subsection
\ref{22} for each $\eps>0$, we may easily show that the problem
\begin{alignat}{1}
\label{pweps}
&Pw^\eps+\zeta_2\theta_\eps(w^\eps-\psi_2)-\zeta_1\theta_\eps(\psi_1-w^\eps)=\varphi\quad\mbox{ in }\Omega_T,\\
\label{eta}
&w^\eps=0\quad\mbox{ on }\Sigma_T\qquad\mbox{ and }\qquad w^\eps=\eta\quad\mbox{ on }\Omega_0,
\end{alignat}
where $\zeta_1=(P\psi_1-\varphi)^-$ and
$\zeta_2=(P\psi_2-\varphi)^+$, has a unique solution $w^\eps\in
L^p(0,T;W^{1,p}_0(\Omega))\cap C([0,T];L^2(\Omega))$, with
$Pw^\eps\in L^{p'}(\Omega_T)$, uniformly in $\eps\le 1$. Similarly
to Proposition \ref{prop22}, it is easy to show that
$$\psi_2-\eps\le w^\eps\le \psi_1+\eps\qquad\mbox{ a.e. in }\Omega_T,$$
and, when $\eps\rightarrow 0$, as in Theorem \ref{teo24}, that
$w^\eps\rightarrow w$ strongly in $L^p(0,T;W^{1,p}_0(\Omega))$,
where $w$ is the unique solution of the double obstacle problem
\begin{equation}\label{k2ob}
w\in\K_{\psi_2}^{\psi_1}=\{v\in L^p(0,T; W^{1,p}_0(\Omega)):\psi_1\ge v\ge \psi_2\mbox{ in }\Omega_T\},
\end{equation}
\begin{equation}\label{iv2ob}
\int_{\Omega_T}(Pw-\varphi)(v-w)\ge 0,\qquad\forall\,v\in\K_{\psi_2}^{\psi_1},\qquad\mbox{ a.e. }t\in(0,T),
\end{equation}
such that $w(0)=\eta$ on $\Omega$. The solution $w$ satisfies also
$$w\in L^p(0,T;W^{1,p}_0(\Omega))\cap C([0,T];L^2(\Omega))\qquad\mbox{ and }\qquad Pw\in L^{p'}(\Omega_T)$$
and, arguing as in Proposition 4.1 of \cite{rs}, we can state the
following important property.
\begin{prop}
The solution $w$ to {\em (\ref{k2ob})-(\ref{iv2ob})}, under
assumptions {\em (\ref{cvarphi})-(\ref{cpsij})}, satisfies the
parabolic nonlinear equation
\begin{equation}
Pw=\varphi+(P\psi_2-\varphi)^+
\chi_{\{w=\psi_2\}}-(P\psi_1-\varphi)^-
\chi_{\{w=\psi_1\}}\qquad\mbox{ a.e. in }\Omega_T.
\end{equation}

In addition, we have the Lewy-Stampacchia inequalities
\begin{equation}\label{2obs}
\varphi-(P\psi_1-\varphi)^-=\varphi\wedge P\psi_1\le Pw\le \varphi\vee P\psi_2=
\varphi+(P\psi_2-\varphi)^+\qquad\mbox{ a.e. in }\Omega_T
\end{equation}
and the a.e. in $\Omega_T$ necessary conditions for contact with the obstacles
\begin{equation}
\{w=\psi_1\}\subset\{P\psi_1\le\varphi\}\qquad\mbox{ and }\qquad\{w=\psi_2\}\subset\{P\psi_2\ge\varphi\}
\end{equation}
being the inclusions valid up to subsets of $\Omega_T$ with zero measure.
\end{prop}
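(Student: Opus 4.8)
The plan is to pass to the limit $\eps\to 0$ in the penalized equation \eqref{pweps} and to identify the weak limits of the two penalization terms through their supports and the local behaviour of $P$ on the coincidence sets. First I would rewrite \eqref{pweps} as $P\wep=\varphi+\mu^\eps-\nu^\eps$, where $\mu^\eps=-\zeta_2\theta_\eps(\wep-\psi_2)\ge 0$ and $\nu^\eps=-\zeta_1\theta_\eps(\psi_1-\wep)\ge 0$. Since $\theta_\eps$ takes values in $[-1,0]$ and $\zeta_1,\zeta_2\in L^{p'}(\Omega_T)$, one has the pointwise bounds $0\le\mu^\eps\le\zeta_2$ and $0\le\nu^\eps\le\zeta_1$, so both terms are bounded in $L^{p'}(\Omega_T)$; extract a subsequence with $\mu^\eps\rightharpoonup\mu$ and $\nu^\eps\rightharpoonup\nu$ weakly in $L^{p'}(\Omega_T)$. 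Using the strong convergence $\wep\to w$ in $L^p(0,T;W^{1,p}_0(\Omega))$ (whence $\nabla_p\wep\to\nabla_p w$ in $L^{p'}$ and $\Delta_p\wep\to\Delta_p w$) together with $\partial_t\wep\rightharpoonup\partial_t w$, I conclude $P\wep\rightharpoonup Pw$ and therefore $Pw=\varphi+\mu-\nu$ in $L^{p'}(\Omega_T)$, with $\mu,\nu\ge 0$.

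Next I would locate the supports of $\mu$ and $\nu$. Passing to a further subsequence with $\wep\to w$ a.e. in $\Omega_T$, on $\{w>\psi_2\}$ we have $\wep-\psi_2>0$ for $\eps$ small, so $\theta_\eps(\wep-\psi_2)=0$ and $\mu^\eps\to 0$; symmetrically $\nu^\eps\to 0$ on $\{w<\psi_1\}$. Hence $\mu=\mu\,\chi_{\{w=\psi_2\}}$ and $\nu=\nu\,\chi_{\{w=\psi_1\}}$, and since $\psi_1\ge\psi_2$ the two contact sets are disjoint on $\{\psi_1>\psi_2\}$, so that $\mu\nu=0$ there.

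The crux is the local identification $Pw=P\psi_2$ a.e. on $\{w=\psi_2\}$ and $Pw=P\psi_1$ a.e. on $\{w=\psi_1\}$. On each coincidence set one has $\nabla w=\nabla\psi_j$ and $\partial_t w=\partial_t\psi_j$ a.e. (the standard level-set property), so $\nabla_p w=\nabla_p\psi_j$ a.e. there; the delicate point is to upgrade this to equality of the full operators $Pw=P\psi_j$ a.e. on the contact set, which is where the divergence structure intervenes and where I expect the real difficulty to lie. I would establish it exactly as in Proposition 4.1 of \cite{rs}. Granting it, combine with the limit equation: on $\{w=\psi_2\}\cap\{\psi_1>\psi_2\}$ one has $\nu=0$, whence $\mu=Pw-\varphi=P\psi_2-\varphi$; positivity of $\mu$ forces $P\psi_2\ge\varphi$ a.e. on $\{w=\psi_2\}$ (the stated contact condition) and $\mu=(P\psi_2-\varphi)^+=\zeta_2$ there. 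The upper obstacle is symmetric, giving $\nu=\zeta_1$ on $\{w=\psi_1\}$ and $\{w=\psi_1\}\subset\{P\psi_1\le\varphi\}$. On the residual set $\{\psi_1=\psi_2\}$, where $w=\psi_1=\psi_2$ is forced, the same local property yields $Pw=P\psi_1=P\psi_2$ and $\zeta_2-\zeta_1=P\psi_1-\varphi$, so the claimed identity $Pw=\varphi+\zeta_2\chi_{\{w=\psi_2\}}-\zeta_1\chi_{\{w=\psi_1\}}$ holds on all of $\Omega_T$.

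Finally, the Lewy--Stampacchia inequalities require no further work: the pointwise bounds $\varphi-\zeta_1\le P\wep\le\varphi+\zeta_2$ pass to the weak limit (the order being preserved under weak $L^{p'}$ convergence against nonnegative test functions), giving $\varphi-(P\psi_1-\varphi)^-\le Pw\le\varphi+(P\psi_2-\varphi)^+$, that is $\varphi\wedge P\psi_1\le Pw\le\varphi\vee P\psi_2$; alternatively they follow immediately from the established equation together with $\zeta_1,\zeta_2\ge 0$.
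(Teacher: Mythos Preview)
Your proposal is correct and follows the same route the paper indicates: the paper does not give an independent proof here but simply sets up the bounded penalization \eqref{pweps} and refers to Proposition~4.1 of \cite{rs} for the argument, which is precisely the passage-to-the-limit you carry out (including the same key step, the local identity $Pw=P\psi_j$ a.e.\ on $\{w=\psi_j\}$, for which you also invoke \cite{rs}). Your treatment of the residual set $\{\psi_1=\psi_2\}$, where $\mu$ and $\nu$ cannot be identified separately but the combination $\mu-\nu=\zeta_2-\zeta_1$ still yields the claimed equation, is a nice point made explicit beyond what the paper writes.
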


\begin{rem} We note that for the case of only one-obstacle, we have similar properties. In fact,
if we formally take $\psi_1\equiv+\infty$, we have a lower obstacle problem
\begin{equation}\label{lobs}
w\ge\psi_2\qquad\mbox{ and }\qquad\varphi\le Pw\le\varphi\vee P\psi_2\qquad\mbox{ a.e. in }\qquad\Omega_T,
\end{equation}
and, with $\psi_2\equiv-\infty$, an upper obstacle problem
\begin{equation}\label{uobs}
w\le\psi_1\qquad\mbox{ and }\qquad\varphi\wedge P\psi_1\le Pw\le\varphi\qquad\mbox{ a.e. in }\qquad\Omega_T.
\end{equation}
\end{rem}

Analogously, the semilinear equation holds in each case with the
corresponding characteristic function, respectively.

\begin{rem}
As observed in {\em \cite{rs}}, we have
\begin{equation}
Pw=\varphi\qquad\mbox{ a.e. in }\{\psi_2<w<\psi_1\}
\end{equation}
and due to the fact that both $Pw$ and $P\psi_j$ are integrable, we have
\begin{equation}
Pw=P\psi_j\qquad \mbox{ a.e. in }\{w=\psi_j\}\qquad\mbox{ for }j=1,2.
\end{equation}
\end{rem}

Using the regularity of Theorem \ref{teo24}, we easily see that each
component $u_i$ of the $N$-membranes problem solves an obstacle type
problem (\ref{k2ob})-(\ref{iv2ob}) with $\varphi=f_i$,
$\psi_1=u_{i-1}$ and $\psi_2=u_{i+1}$ (with the conventions
$u_0\equiv+\infty$ and $u_{N+1}\equiv-\infty$ corresponding to the
one-obstacle problems). Hence, we have from (\ref{lobs}),
(\ref{2obs}) and (\ref{uobs}), respectively, a.e. in $\Omega_T$,
\begin{eqnarray*}
\begin{array}{cllll}
 f_1&\le&P u_1&\le& f_1\vee Pu_2\qquad\qquad\\
\vdots& &&&\vdots\\
 f_i\wedge Pu_{i-1}&\le&Pu_i&\le&f_i\vee Pu_{i+1}\qquad(i=2,\ldots,N-1)\\
\vdots& &&&\vdots\\
   f_N\wedge Pu_{N-1}&\le&Pu_{N}&\le& f_N\qquad\mbox{ a.e. in }\Omega_T.\end{array}
\end{eqnarray*}

By simple iteration, we have shown the following Lewy-Stampacchia
type inequalities, that extend Theorem 3.5 of \cite{ars1} to the
evolution $N$-membranes problem

\begin{thm} \label{lsineq} The solution $\bu$ of {\em (\ref{def3})-(\ref{ineq_var})} satisfies
$$\bigwedge_{j=1}^i f_j\le Pu_i\le\bigvee_{j=i}^N f_j\qquad\mbox{ a.e. in }\Omega_T,\qquad i=1,\ldots,N.$$
\end{thm}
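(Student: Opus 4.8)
The plan is to derive the Lewy--Stampacchia inequalities of Theorem~\ref{lsineq} from the scalar obstacle-problem estimates just established, namely (\ref{lobs}), (\ref{2obs}), and (\ref{uobs}), by iterating them through the ordered structure of the system. The starting point is the observation, already made in the excerpt, that each component $u_i$ solves an obstacle-type problem with $\varphi=f_i$, upper obstacle $\psi_1=u_{i-1}$, and lower obstacle $\psi_2=u_{i+1}$ (using the conventions $u_0\equiv+\infty$ and $u_{N+1}\equiv-\infty$, so that the extremal components $u_1$ and $u_N$ reduce to single-obstacle problems). This yields the displayed chain of pointwise inequalities
\begin{equation*}
f_i\wedge Pu_{i-1}\le Pu_i\le f_i\vee Pu_{i+1}\qquad\mbox{ a.e. in }\Omega_T,
\end{equation*}
which couples $Pu_i$ to its neighbours $Pu_{i-1}$ and $Pu_{i+1}$.

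First I would prove the upper bound $Pu_i\le\bigvee_{j=i}^N f_j$ by downward induction on $i$, starting from the top component. The base case is $i=N$, where (\ref{uobs}) gives directly $Pu_N\le f_N=\bigvee_{j=N}^N f_j$. For the inductive step, assuming $Pu_{i+1}\le\bigvee_{j=i+1}^N f_j$, the middle inequality $Pu_i\le f_i\vee Pu_{i+1}$ combined with monotonicity of $\vee$ gives
\begin{equation*}
Pu_i\le f_i\vee\Big(\bigvee_{j=i+1}^N f_j\Big)=\bigvee_{j=i}^N f_j,
\end{equation*}
which closes the induction. The lower bound $\bigwedge_{j=1}^i f_j\le Pu_i$ is entirely symmetric: I would run an upward induction on $i$ from the base case $i=1$, where (\ref{lobs}) gives $f_1\le Pu_1$, using at each step the inequality $f_i\wedge Pu_{i-1}\le Pu_i$ together with the induction hypothesis $\bigwedge_{j=1}^{i-1}f_j\le Pu_{i-1}$ and monotonicity of $\wedge$.

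The two inductions are independent and each step is a one-line application of the associativity and monotonicity of the lattice operations $\vee$ and $\wedge$, so there is genuinely no analytic difficulty remaining at this stage. The entire substance of the theorem has already been absorbed into the scalar dual estimates (\ref{lobs})--(\ref{uobs}) and into the identification of each $u_i$ as the solution of a two-obstacle problem with neighbouring components as obstacles; the latter relies crucially on the regularity $Pu_i\in\boldsymbol{L}^{p'}(\Omega_T)$ furnished by Theorem~\ref{teo24} (via (\ref{215}) and (\ref{delta})), which guarantees that the hypotheses (\ref{cvarphi})--(\ref{cpsij}) are met with $\psi_1=u_{i-1}$ and $\psi_2=u_{i+1}$. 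Thus the only point requiring care — and the one I would state explicitly — is the verification that the obstacles $u_{i\pm1}$ indeed satisfy the compatibility and integrability conditions of Proposition~\ref{prop22}'s elliptic analogue in the two-obstacle setting, after which the "simple iteration" announced before the statement is exactly the pair of finite inductions described above.
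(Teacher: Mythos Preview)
Your proposal is correct and follows essentially the same approach as the paper: identify each $u_i$ as the solution of a (one- or two-) obstacle problem with neighbouring components as obstacles, invoke the scalar Lewy--Stampacchia estimates (\ref{lobs}), (\ref{2obs}), (\ref{uobs}) to obtain $f_i\wedge Pu_{i-1}\le Pu_i\le f_i\vee Pu_{i+1}$, and then iterate. Your explicit upward/downward inductions are precisely what the paper means by ``simple iteration''.
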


\vspace{3mm}

\subsection{The nonlinear $N$-system}

As a consequence of the equivalence of the $N$-\mbox{-membranes} inequality
with two one--obstacle problems and $N-2$  two--obstacles problems,
we may prove the equivalence of this inequality with a $N$-system of
equations, strongly coupled by the $\frac{N(N-1)}2$ coincidence sets
$I_{j,k}$ defined in (\ref{coinc_sets}). Indeed, we can argue as in
section 4 of \cite{ars1}, and since we know that $Pu_i\in
L^{p'}(\Omega_T)$, for all $i=1,\ldots,N$, we have on each
coincidence set
$$Pu_j=\cdots=Pu_k\qquad\mbox{ a.e. in }\qquad I_{j,k}=\{u_j=\cdots=u_k\}$$
and we conclude, for each $j\le i\le k$,
$$Pu_i=\langle\bff\rangle_{j,k}\qquad\mbox{ a.e. in }\qquad I_{j,k},$$
where we introduce the averages of $\bff$ by
$$\langle\bff\rangle_{j,k}=\frac{f_j+\cdots+f_k}{k-j+1},\qquad 1\le j\le k\le N.$$

On the other hand, in the complementary sets $\Omega_T\setminus I_{j,k}$, for each $i>k>j$ or
$i<j<k$, we have
$$Pu_i=f_i\qquad\mbox{ a.e. in }\qquad\Omega_T\setminus I_{j,k},$$
and we conclude, as in \cite{ars1}, the following explicit form for
(\ref{react_diff}).

\begin{thm} The variational solution of the $N$-membranes problem {\em (\ref{def3})-(\ref{ineq_var})} satisfies the system
{\em (}$i=1,\ldots,N${\em )}
\begin{equation}\label{systemf}
Pu_i=f_i+\sum_{\mbox{\scriptsize{$1\le j<k\le N, j\le i\le
k$}}}b^{j,k}_i[\bff]\,\cchi_{j,k}\qquad\mbox{ a.e. in }\Omega_T,
\end{equation}
where $\cchi_{j,k}$ denotes the characteristic function of each $I_{j,k}$ and
\begin{eqnarray*}
 b_i^{j,k}[\bff] & = & \left\{
\begin{array}{ll}
\langle \bff\rangle_{j,k}-\langle \bff\rangle_{j,k-1} & \mbox{ if
 }\ i=j\\
 & \\
\mbox{} \langle \bff\rangle_{j,k}-\langle \bff\rangle_{j+1,k} & \mbox{ if }\ i=k\\
  & \\
\frac{2}{(k-j)(k-j+1)}\left(\langle \bff\rangle_{j+1,k-1}
-\frac12(\bff_j+\bff_k)\right) & \mbox{ if }\ j<i<k.
\end{array}\right.
\end{eqnarray*}
\end{thm}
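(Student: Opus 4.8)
The plan is to derive the explicit system \eqref{systemf} by combining the Lewy-Stampacchia structure already established for each component with an analysis on the coincidence sets $I_{j,k}$, exactly in the spirit of section 4 of \cite{ars1} but using the parabolic regularity $Pu_i\in L^{p'}(\Omega_T)$ from Theorem \ref{teo24}. The starting observation is that, since each $u_i$ solves a one-- or two--obstacle problem with $\varphi=f_i$, $\psi_1=u_{i-1}$, $\psi_2=u_{i+1}$, the second Remark following the dual estimates gives, on any overlap of coincidence sets, the crucial identity $Pu_j=\cdots=Pu_k$ a.e.\ in $I_{j,k}$. Combined with the fact that $Pu_i=f_i$ a.e.\ off the relevant coincidence sets, this localizes the problem: on the interior $\{\psi_2<u_i<\psi_1\}$ one has $Pu_i=f_i$, while on each $I_{j,k}$ the common value of the $Pu_\ell$ must equal the average $\langle\bff\rangle_{j,k}$.

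The key algebraic step is to justify $Pu_i=\langle\bff\rangle_{j,k}$ a.e.\ in $I_{j,k}$ for each $j\le i\le k$. First I would observe that on $I_{j,k}=\{u_j=\cdots=u_k\}$ all spatial derivatives $\nabla u_\ell$ coincide a.e.\ and all time derivatives $\partial_t u_\ell$ coincide a.e.\ (a standard fact: on the set where two $W^{1,p}$ functions agree, their gradients agree a.e., and similarly in the distributional-in-time sense because $I_{j,k}$ is defined up to measure zero), so that indeed $Pu_j=\cdots=Pu_k$ there. Summing these $k-j+1$ equal quantities and using that off the smaller coincidence set each $Pu_\ell$ contributes only $f_\ell$, the telescoping of the interior equations forces the sum $\sum_{\ell=j}^k Pu_\ell=\sum_{\ell=j}^k f_\ell$ a.e.\ in $I_{j,k}$; dividing by $k-j+1$ yields the average. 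The main obstacle is precisely this bookkeeping on nested coincidence sets, since the decomposition $\cchi_{j,k}=\cchi_{j,j+1}\cdots\cchi_{k-1,k}$ means the sets are not disjoint and one must argue on the finest partition of $\Omega_T$ into level sets of the ordering before reassembling.

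With these pointwise identities in hand, the remaining task is purely computational: writing $Pu_i=f_i+\sum_{j\le i\le k}b_i^{j,k}[\bff]\,\cchi_{j,k}$ as an ansatz and matching, on each coincidence set, the required value $\langle\bff\rangle_{j,k}$ against $f_i$ plus the accumulated correction terms. I would determine the coefficients $b_i^{j,k}$ by an induction on $k-j$: for the endpoints $i=j$ and $i=k$ the correction is a single-step difference of consecutive averages, $\langle\bff\rangle_{j,k}-\langle\bff\rangle_{j,k-1}$ and $\langle\bff\rangle_{j,k}-\langle\bff\rangle_{j+1,k}$ respectively, while for an interior index $j<i<k$ the contributions from all surrounding coincidence sets telescope and leave the symmetric residual $\tfrac{2}{(k-j)(k-j+1)}\big(\langle\bff\rangle_{j+1,k-1}-\tfrac12(f_j+f_k)\big)$. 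The verification that these coefficients are consistent, i.e.\ that on each $I_{j,k}$ the right-hand side of \eqref{systemf} collapses to the correct average for every admissible $i$, reduces to the identity used in \cite{ars1} and can be checked directly once the partition argument above is settled. Since the structural input (the Lewy-Stampacchia inequalities and the $L^{p'}$-integrability of each $Pu_i$) is identical to the stationary case and is now available in the parabolic setting through Theorem \ref{teo24}, the parabolic proof follows the elliptic one verbatim.
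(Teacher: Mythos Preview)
Your proposal is correct and follows essentially the same route as the paper: both defer the algebraic identification of the coefficients $b_i^{j,k}[\bff]$ to \cite{ars1} after establishing that $Pu_i\in L^{p'}(\Omega_T)$, that $Pu_j=\cdots=Pu_k$ a.e.\ on each $I_{j,k}$ (hence equal to the average $\langle\bff\rangle_{j,k}$), and that $Pu_i=f_i$ off the relevant coincidence sets. The only cosmetic difference is that the paper obtains $Pu_j=Pu_k$ on $\{u_j=u_k\}$ directly from the obstacle-problem remark that $Pw=P\psi_j$ a.e.\ on $\{w=\psi_j\}$ whenever both quantities are integrable, rather than by arguing separately about coincidence of $\partial_t u_\ell$ and $\nabla u_\ell$ (the former being a priori only in $L^{p'}(0,T;W^{-1,p'})$).
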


For the particular case $N=3$ (and $N=2$), we can easily deduce
(\ref{sistemachi}) from (\ref{systemf}).

\vspace{3mm}

\subsection{Convergence of coincidence sets}

From Theorem \ref{tap}, we know that if for sequences
\begin{equation}\label{convf}
\tende{\bff^\nu}{\nu}{\bff}\qquad\mbox{ in }\bs L^q(\Omega_T),\qquad q=p'\wedge 2,
\end{equation}
\begin{equation}\label{convh}
\tende{\bs h^\nu}{\nu}{\bs h}\qquad\mbox{ in }\bs L^2(\Omega_T),
\end{equation}
then, the corresponding solutions of  (\ref{def3})-(\ref{ineq_var}) also converge
\begin{equation}\label{convu}
\tende{\bu^\nu}{\nu}{\bu}\qquad\mbox{ in }C^0([0,T];\bs
L^2(\Omega))\cap L^p(0,T;\bs W^{1,p}_0(\Omega)).
\end{equation}

Consequently, we have
$$\Delta_p\bu^\nu\ _{ \stackrel{\lraup}{\nu}}\Delta_p\bu\qquad\mbox{ in }L^{p'}(0,T;\bs W^{-1,p'}(\Omega))-\mbox{weak}$$
and, by Theorem \ref{lsineq}, also
$$P\bu^\nu\ _{ \stackrel{\lraup}{\nu}}\ P\bu\qquad\mbox{ in }\bs L^{q}(\Omega_T)-\mbox{weak}.$$

Since the characteristic functions
$\cchi_{j,k}^\nu=\cchi_{\{u_j^\nu=\cdots=u_k^\nu\}}$ satisfy $0\le
\cchi_{j,k}^\nu\le 1$ a.e. in $\Omega_T$, there are
$\cchi_{j,k}^*\in L^\infty(\Omega_T)$ such that
$$\cchi_{j,k}^\nu\ _{ \stackrel{\lraup}{\nu}}\ \cchi_{j,k}^*\qquad\mbox{ in }\bs L^{\infty}(\Omega_T)-\mbox{weak}*.$$

Passing to the limit in
$$Pu_i^\nu=f_i^\nu+\sum_{\mbox{\scriptsize{$1\le j<k\le N,
j\le i\le k$}}}b^{j,k}_i[\bff^\nu]\,\cchi_{j,k}^\nu,$$
we obtain, for each $i=1,\ldots,N,$
$$Pu_i=f_i+\sum_{\mbox{\scriptsize{$1\le j<k\le N,
j\le i\le k$}}}b^{j,k}_i[\bff]\,\cchi_{j,k}^*,$$
which compared with (\ref{systemf}) yields
$$\sum_{\mbox{\scriptsize{$1\le j<k\le N,
j\le i\le
k$}}}b^{j,k}_i[\bff](\cchi_{j,k}-\cchi_{j,k}^*)=0\qquad\mbox{ a.e.
in }\Omega_T.$$

Arguing exactly as in the proof of Theorem 4.6 of \cite{ars1}, we
conclude, under the same nondegeneracy assumption for the limit
data, namely
\begin{equation}\label{nondeg}
\langle\bff\rangle_{i,j}\neq\langle\bff\rangle_{j+1,k}, \ \mbox{a.e.
in }\Omega_T, \ \mbox{ for all }i,j,k\in\{1,\ldots,N\},\mbox{ with
}i\le j\le k,
\end{equation}
that $\cchi_{j,k}=\cchi_{j,k}^*$ and prove the following stability
property for the respective coincidence sets
$I_{j,k}^\nu=\{u_j^\nu=\cdots=u_k^\nu\}$.

\begin{thm}\label{convcc}
Under the convergence assumptions {\em (\ref{convf})} and {\em
(\ref{convh})}, the characteristic functions associated with the
convergent variational solutions {\em (\ref{convu})} also converge
$$\tende{\cchi_{\{u_j^\nu=\cdots=u_k^\nu\}}}{\nu}{\cchi_{\{u_j=\cdots=u_k\}}}\qquad\mbox{ in }L^s(\Omega_T),$$
for any $1\le s<\infty$, all $1\le j<k\le N$, provided the
nondegeneracy condition {\em (\ref{nondeg})} holds.
\end{thm}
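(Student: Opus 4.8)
The plan is to establish the $L^s$ convergence of the characteristic functions by combining the weak-$*$ identification $\cchi_{j,k}^\nu \rightharpoonup \cchi_{j,k}^*$ already obtained in the text with the crucial pointwise identification $\cchi_{j,k}^* = \cchi_{j,k}$ that follows from the nondegeneracy condition \eqref{nondeg}. The key observation is that for characteristic functions, which take only the values $0$ and $1$, weak convergence to another characteristic function automatically upgrades to strong convergence in every $L^s$, $1\le s<\infty$. This is the standard fact that if $g_\nu \rightharpoonup g$ weakly in $L^2(\Omega_T)$ with $0\le g_\nu \le 1$ and $g$ itself a characteristic function, then $g_\nu \to g$ strongly.

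First I would carry out the identification step in detail. The text already shows, by passing to the limit in the system \eqref{systemf} for $\bu^\nu$ and comparing with the limit system, that
$$\sum_{1\le j<k\le N,\ j\le i\le k} b_i^{j,k}[\bff]\,(\cchi_{j,k}-\cchi_{j,k}^*)=0\qquad\mbox{a.e. in }\Omega_T,$$
for each $i=1,\ldots,N$. Arguing as in Theorem 4.6 of \cite{ars1}, the nondegeneracy \eqref{nondeg} guarantees that the coefficients $b_i^{j,k}[\bff]$ do not vanish on the relevant sets, so this linear system in the differences $\cchi_{j,k}-\cchi_{j,k}^*$ can be solved, forcing each difference to vanish and hence $\cchi_{j,k}^*=\cchi_{j,k}$ a.e. in $\Omega_T$.

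Once this identification is in hand, I would prove the strong convergence via the standard computation. Since each $\cchi_{j,k}^\nu$ is a characteristic function, $(\cchi_{j,k}^\nu)^2=\cchi_{j,k}^\nu$, and likewise $(\cchi_{j,k})^2=\cchi_{j,k}$. Therefore
$$\int_{\Omega_T}|\cchi_{j,k}^\nu-\cchi_{j,k}|^2 = \int_{\Omega_T}\cchi_{j,k}^\nu - 2\int_{\Omega_T}\cchi_{j,k}^\nu\,\cchi_{j,k} + \int_{\Omega_T}\cchi_{j,k}.$$
Using the weak-$*$ convergence $\cchi_{j,k}^\nu \rightharpoonup \cchi_{j,k}$ (now that $\cchi_{j,k}^*=\cchi_{j,k}$), tested against the fixed $L^1$ functions $1$ and $\cchi_{j,k}$, each term on the right converges to $\int_{\Omega_T}\cchi_{j,k}$, so the whole expression tends to $2\int_{\Omega_T}\cchi_{j,k}-2\int_{\Omega_T}\cchi_{j,k}=0$. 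This gives convergence in $L^2(\Omega_T)$, and since the functions are uniformly bounded in $L^\infty$, interpolation yields convergence in every $L^s(\Omega_T)$ with $1\le s<\infty$.

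The main obstacle is the identification step, that is, solving the linear system to deduce $\cchi_{j,k}=\cchi_{j,k}^*$ from the nondegeneracy hypothesis; this is precisely where the delicate combinatorial structure of the coefficients $b_i^{j,k}[\bff]$ and the doubly-nonlinear dependence through products $\cchi_{j,k}=\cchi_{j,j+1}\cdots\cchi_{k-1,k}$ enter, and it is handled by invoking the argument of \cite{ars1}. The strong-convergence upgrade afterwards is then essentially automatic for characteristic functions and constitutes the easy part of the proof.
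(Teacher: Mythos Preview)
Your proposal is correct and follows essentially the same route as the paper: the identification $\cchi_{j,k}^*=\cchi_{j,k}$ is obtained exactly as in the discussion preceding the theorem (by passing to the limit in the system \eqref{systemf} and invoking the combinatorial argument of \cite{ars1} under \eqref{nondeg}), and the upgrade from weak to strong convergence is the standard fact for characteristic functions. The only cosmetic difference is that the paper phrases this upgrade via uniqueness of subsequential weak-$*$ limits (so the whole sequence converges, and weak convergence of characteristic functions to a characteristic function is strong), whereas you write out the explicit $L^2$ computation; both are equivalent formulations of the same elementary lemma.
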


\vspace{3mm}

\subsection{Asymptotic stabilization as $t\rightarrow\infty$}

In this section we assume $p\ge 2$ and we consider the unique
solution $\bu^\infty$ to the stationary $N$-membranes problem for a
given $\bff^\infty\in\bs L^{p'}(\Omega)$:
\begin{equation}
\bu^\infty\in\K:\qquad\int_\Omega\gd_p\bu^\infty:\gd(\bs v-\bu^\infty)\ge\int_\Omega
\bff^\infty\cdot(\bs v-\bu^\infty),\qquad\forall\bs v\in\K.
\end{equation}

Supposing that the problem  (\ref{def3})-(\ref{ineq_var}) is
solvable for all $T<\infty$ and that
$\bff(t)\longrightarrow\bff^\infty$ in $\bs L^{p'}(\Omega)$ as
$t\rightarrow\infty$ in the sense
\begin{equation}\label{tt+1f}
\int_t^{t+1}\int_\Omega|\bff(t)-\bff^\infty|^{p'}\longrightarrow 0\qquad\mbox{ as }t\rightarrow\infty,
\end{equation}
by the results of \cite{simon}, the evolutive solution $\bu(t)$ is such that
\begin{equation}\label{tt+1u}
\tende{\bu(t)}{t\rightarrow\infty}{\bu^\infty}\qquad \mbox{ in }\bs L^2(\Omega),
\end{equation}
\begin{equation}\label{tt+1gdu}
\int_t^{t+1}\int_\Omega|\gd\bu(t)-\gd\bu^\infty|^{p}\longrightarrow 0\qquad\mbox{ as }t\rightarrow\infty.
\end{equation}

By the results of \cite{ars1}, the stationary solution also solves
the nonlinear $N$-system
\begin{equation}
-\Delta_p u^\infty_i=f^\infty_i+\sum_{\mbox{\footnotesize{$1\le j<k\le N,\
j\le i\le k$}}}b^{j,k}_i[\bff^\infty]\,\cchi_{j,k}^\infty\qquad\mbox{ a.e. in }\Omega,
\end{equation}
where $\cchi_{j,k}^\infty=\cchi_{\{u_j^\infty=\cdots=u_k^\infty\}}$
denotes the characteristic function of the limit coincidence set
$I_{j,k}^\infty=\{x\in\Omega:u_j^\infty(x)=\cdots=u_k^\infty(x)\}.$

Denoting by $\cchi_{j,k}(t)$ the characteristic functions of
$I_{j,k}(t)=\{u_j(t)=\cdots=u_k(t)\}$ at time $t$, we have the
following asymptotic convergence result as $t\rightarrow\infty$.

\begin{thm}
Under assumption {\em (\ref{tt+1f})}, the variational solution of
the evolution $N$-membranes problem converges to the corresponding
stationary solution in the sense {\em (\ref{tt+1u})} and {\em
(\ref{tt+1gdu})}. In addition, the characteristic functions satisfy
\begin{equation}\label{convforte}
\cchi_{j,k}(t)\longrightarrow\cchi_{j,k}^\infty\qquad\mbox{ as }t
\rightarrow\infty\quad\mbox{ in }L^s(\Omega),
\end{equation}
for any $1\le s<\infty$, for all $1\le j<k\le N$, provided we assume
\begin{equation}\label{nondeginf}
\langle\bff^\infty\rangle_{i,j}\neq\langle\bff^\infty\rangle_{j+1,k}\qquad\mbox{a.e.
in }\Omega,\quad \mbox{ for all }1\le i\le j<k\le N.
\end{equation}
\end{thm}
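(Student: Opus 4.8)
The first assertion, the convergence of $\bu(t)$ to $\bu^\infty$ in the sense \eqref{tt+1u}--\eqref{tt+1gdu}, is precisely what the results of \cite{simon} recalled above provide for our monotone flow, so the real point of the proof is the stabilization \eqref{convforte} of the coincidence sets. The plan is to reduce the limit $t\to\infty$ to the perturbation result of Theorem~\ref{convcc} by freezing the asymptotics on a fixed cylinder. Fix an arbitrary sequence $t_n\to\infty$ and set, on $\Omega\times(0,1)$,
$$\bu^n(x,s):=\bu(x,t_n+s),\qquad\bff^n(x,s):=\bff(x,t_n+s).$$
Each $\bu^n$ solves the evolution $N$-membranes problem on $\Omega\times(0,1)$ with datum $\bff^n$ and Cauchy datum $\bu(t_n)$; by \eqref{tt+1f} one has $\bff^n\to\bff^\infty$ in $\bs L^{p'}(\Omega\times(0,1))$ and, by \eqref{tt+1u}--\eqref{tt+1gdu}, $\bu^n\to\bu^\infty$ with $\gd\bu^n\to\gd\bu^\infty$ strongly in $\bs L^p(\Omega\times(0,1))$, where $\bu^\infty$ is read as a function independent of $s$.

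I would then pass to the limit in the system \eqref{systemf} written for the translates,
$$Pu_i^n=f_i^n+\sum_{1\le j<k\le N,\ j\le i\le k}b_i^{j,k}[\bff^n]\,\cchi_{j,k}^n,\qquad \cchi_{j,k}^n=\cchi_{\{u_j^n=\cdots=u_k^n\}}.$$
Since $0\le\cchi_{j,k}^n\le1$, a subsequence converges weak-$*$ in $L^\infty(\Omega\times(0,1))$ to some $\cchi_{j,k}^*$; moreover $b_i^{j,k}[\bff^n]\to b_i^{j,k}[\bff^\infty]$, and because $\bu^\infty$ is stationary, $\partial_s\bu^n\rightharpoonup 0$ while $\Delta_p\bu^n\to\Delta_p\bu^\infty$, so that $P\bu^n\rightharpoonup-\Delta_p\bu^\infty$. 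The limit thus reads $-\Delta_p u_i^\infty=f_i^\infty+\sum b_i^{j,k}[\bff^\infty]\,\cchi_{j,k}^*$, and comparison with the stationary $N$-system recalled above gives
$$\sum_{1\le j<k\le N,\ j\le i\le k}b_i^{j,k}[\bff^\infty]\,(\cchi_{j,k}^*-\cchi_{j,k}^\infty)=0\qquad\mbox{a.e. in }\Omega\times(0,1),\quad i=1,\ldots,N.$$
Arguing exactly as in Theorem~4.6 of \cite{ars1} under the nondegeneracy \eqref{nondeginf}, I would deduce $\cchi_{j,k}^*=\cchi_{j,k}^\infty$ for every $1\le j<k\le N$.

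To upgrade this weak-$*$ identification to strong convergence I would use that the limit is $\{0,1\}$-valued, exactly as for Theorem~\ref{convcc}: from $(\cchi_{j,k}^n)^2\le\cchi_{j,k}^n$ and $(\cchi_{j,k}^\infty)^2=\cchi_{j,k}^\infty$, testing the weak-$*$ convergence against $1$ and against $\cchi_{j,k}^\infty$ gives $\int(\cchi_{j,k}^n)^2\le\int\cchi_{j,k}^n\to\int\cchi_{j,k}^\infty$ and $\int\cchi_{j,k}^n\cchi_{j,k}^\infty\to\int\cchi_{j,k}^\infty$, whence $\int|\cchi_{j,k}^n-\cchi_{j,k}^\infty|^2\to0$ and, by the uniform $L^\infty$ bound, $\cchi_{j,k}^n\to\cchi_{j,k}^\infty$ in every $L^s(\Omega\times(0,1))$, $1\le s<\infty$.

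The main obstacle is the final passage to the single time-slice asserted in \eqref{convforte}: the convergence obtained above lives on the space-time cylinder and, through \eqref{tt+1gdu}, is only time-averaged, whereas \eqref{convforte} controls $\cchi_{j,k}(t)$ for $t\to\infty$. Strong $L^s(\Omega\times(0,1))$ convergence yields $\cchi_{j,k}(t_n+s)\to\cchi_{j,k}^\infty$ in $L^s(\Omega)$ for a.e. $s$, hence along some sequence of times tending to infinity, and, since the integrand is $\{0,1\}$-valued, the quantity at stake is the average over $[t_n,t_n+1]$ of the symmetric-difference measure $|I_{j,k}(\tau)\,\triangle\,I_{j,k}^\infty|$, which tends to $0$. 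The delicate step is to promote this averaged decay to the genuine limit $t\to\infty$; here I would exploit that the nondegeneracy \eqref{nondeginf} forces the limit $\cchi_{j,k}^\infty$ to be independent of the sequence $t_n$ and of the extracted slice, and then rule out by contradiction the existence of times $t_n\to\infty$ along which $|I_{j,k}(t_n)\,\triangle\,I_{j,k}^\infty|$ stays bounded away from $0$. This reconciliation of the only time-averaged gradient convergence with a pointwise-in-time statement is where the real work lies.
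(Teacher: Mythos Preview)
Your argument is essentially the paper's own: translate to the unit cylinder via $\bu_\sharp(t)(\sigma,\cdot)=\bu(t+\sigma,\cdot)$, use \eqref{tt+1f}--\eqref{tt+1gdu} together with the Lewy--Stampacchia bounds of Theorem~\ref{lsineq} to pass to the limit in \eqref{systemf}, compare with the stationary $N$-system, invoke the nondegeneracy \eqref{nondeginf} exactly as in Theorem~4.6 of \cite{ars1} to identify the weak-$*$ limit, and then upgrade to strong $L^s$ convergence because the limit is a genuine characteristic function and is independent of the subsequence. The paper does not go beyond this; in particular it does not supply any additional mechanism for the time-slice issue you flag at the end --- its concluding sentence is precisely the ``characteristic function $+$ unique limit $\Rightarrow$ strong convergence'' step you already wrote out, and the convergence \eqref{convforte} is effectively obtained in the same time-averaged sense as \eqref{tt+1gdu}. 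So your worry about promoting averaged decay to a pointwise-in-$t$ statement is a scruple the paper itself does not address; apart from that, your proof and the paper's coincide.
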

\begin{proof}
We rewrite (\ref{tt+1gdu}) for $\bs w(t)=\bu(t)-\bu^\infty$ as
$$\int_t^{t+1}\int_\Omega|\gd\bs w(\tau)|^pd\tau=\int_0^1\int_\Omega|\gd\bs w(t+\sigma)|^pd\sigma
\longrightarrow 0\qquad\mbox{ as }t\rightarrow\infty,$$ and this
convergence can be interpreted as
\begin{equation}\label{sharp}
\bs w_\sharp(t)\longrightarrow 0\qquad\mbox{ as }t\rightarrow\infty\mbox{ in }L^p(0,1;\bs W^{1,p}_0(\Omega)),
\end{equation}
where we define $\bs w_\sharp\in L^\infty(0,\infty;L^p(0,1;\bs W^{1,p}_0(\Omega)))$ as
$$\bs w_\sharp(t) (\sigma,\cdot)= \bs w(t+\sigma,\cdot) \in \bs W^{1,p}_0(\Omega), \quad \sigma \in (0,1).$$

Consequently, from (\ref{sharp}) we have
$$\Delta_p\bs u_\sharp(t)\rightharpoonup \Delta_p\bs u_\sharp^\infty\qquad\mbox{ as }t\rightarrow
\infty \mbox{ in }L^{p'}(0,1;\bs W^{-1,p'}(\Omega))-\mbox{weak}$$
and, recalling the estimates of Theorem \ref{lsineq} and the
assumption (\ref{tt+1f}), we may conclude
$$(\partial_t\bu_\sharp-\Delta_p\bs u_\sharp)(t)\rightharpoonup -\Delta_p\bs
u_\sharp^\infty\qquad\mbox{ as }t\rightarrow\infty \mbox{ in }L^{p'}(0,1;\bs L^{p'}(\Omega))-\mbox{weak}.$$

Since $\bu_\sharp(t)$ solves (\ref{systemf}), a.e. in $\Omega$ and
for a.e. $t>0$, we can pass to the limit, as $t\rightarrow\infty$,
in $L^{p'}(0,1; \bs L^{p'}(\Omega))$. As in the proof of Theorem
\ref{convcc} (and Theorem 4.6 of \cite{ars1}), we conclude that
assumption (\ref{nondeginf}) implies the convergence
$\cchi_{j,k}(t)\longrightarrow\cchi_{j,k}^\infty$ as $t\rightarrow
\infty$, first as functions of $L^\infty(0,1;L^\infty(\Omega))$ with
the weak-$*$ topology and, afterwards, also in the sense of
(\ref{convforte}). Indeed, since they are characteristic functions
and any subsequence of $\cchi_{j,k}(t)$ has the same limit
$\cchi_{j,k}^\infty$, their weak convergence implies the strong
convergence in $L^s(\Omega)$ for all $1\le s<\infty$.
\end{proof}

\vspace{5mm}

\end{document}